\theoremstyle{plain}
\newtheorem{Lem}{Lemma}[section]
\newtheorem{Prop}[Lem]{Proposition}
\newtheorem{The}[Lem]{Theorem}
\theoremstyle{definition}
\newtheorem*{Que}{Question}
\newtheorem{definition}[Lem]{Definition}
\newtheorem{Rem}[Lem]{Remark}
\newtheorem{Exe}[Lem]{Example}
\newcommand{\fvie}[1]{{\rm Stab}(#1)}
\def\smap{\mathcal{F}}
\def\wsmap{\smap^*}
\def\msmap{\smap^\star}
\def\Fix{\mathop{\rm Fix}}
\def\Orb{\mathop{\rm Orb}}
\def\Stab{\mathop{\rm Stab}}
\def\Atrac{\mathop{\rm Atrac}}
\begin{document}
\author{Godelle Eddy}
\title{Stable set of self-map}
\maketitle
\begin{abstract} The attracting set and the inverse limit set are important objects associated to a self-map on a set. We call \emph{stable set} of the self-map the projection of the inverse limit set. It is included in the attracting set, but is not equal in the general case. Here  we determine whether or not the equality holds in several particular cases, among which are the case of a dense range continuous function on an Hilbert space, and the case of a substitution over left infinite words.    

{\it 2000 Mathematics Subject Classification}: 20M30,37C25,68R15,20E05.\\keywords : self-map, orbit set, stable set, attracting set.\end{abstract}
\section{Introduction}
Let~$X$ be a set and $\varphi: X\to X$ be a self-map. One can associated to~$\varphi$ four subsets of $X$, namely its \emph{fixed set}~$\Fix(\varphi)$, its \emph{orbit set}~$\Orb(\varphi)$, its \emph{stable set}~$\fvie{\varphi}$ and its \emph{attracting set}~$\Atrac(\varphi)$, which are defined by 
$$\Fix(\varphi) = \{x\in  E\mid \varphi(x) = x\},$$ 
$$\Orb(\varphi) = \cup_{n\in \mathbb{N}}\Fix(\varphi^n),$$ 
$$\fvie{\varphi} = \{x\in X\mid \exists (x_n)_{n\geq 0}, \varphi(x_{n+1}) = x_n\textrm{ and  }x_0 = x\},$$
$$\Atrac(\varphi) = \bigcap_{n\in \mathbb{N}} \varphi^n(X).$$ 
It is clear that the following sequence of inclusions holds. \begin{equation}\Fix(\varphi) \subseteq \Orb(\varphi) \subseteq \Stab(\varphi)\subseteq \Atrac(\varphi).\label{suiteinclusions}\end{equation} 
 These four subsets are the objects of numerous articles. The fixed set of an automorphism of a finitely generated (free) group has been widely studied~\cite{BeHa,DySc,JaSh,Shp,Ven}. The orbit set and the attracting set are considered in  Dynamic System Theory~\cite{ChEvWa,FaJo}. The stable set has been considered in Mathematical Economy~\cite{Hol,MeRa}, in Theoretical Computer Science~\cite{GlLeRi,ShWa}, in analysis~\cite{PoC2} and in Dynamic System Theory~\cite{Arnetall}. Moreover, the stable set is implicitly a key tool in one of the proof of Scott Conjecture~\cite{ImrTur}. In general the inclusions in the sequence~(\ref{suiteinclusions}) are not equalities. In this note we  address the question of deciding whether or not the latter inclusion is indeed an equality under some various extra structural hypotheses. For instance, we consider the case of dense range continuous self-maps on an Hilbert space, homomorphisms of free groups and substitution over right infinite words. Surprisingly, we have not be able to find any reference where this natural question is even mentioned. One of the motivation for this work is to fill this gap. Another motivation is a better understanding of some families of infinite words that occur as stable-set of a family of substitutions ({\it cf.} Section~\ref{sectgenera} for details). The two following theorems gather our main results: 
\begin{The} Assume one of the following cases:\label{THintro2}
\begin{enumerate}
\item the set~$X$ is an Hilbert space with a countably infinite base, and~$\varphi$ is a linear continuous self-map with dense range;
\item the set~$X$ is the open unit disc~$\mathbb{D}$ of the complex plane, and~$\varphi$ is an analytic function such that $|\varphi(z)|<1$.
\end{enumerate}
Then, the equality $\Stab(\varphi) = \Atrac(\varphi)$ may not hold.
\end{The}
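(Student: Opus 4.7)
The plan is to produce, in each case, a concrete $\varphi$ for which $\Stab(\varphi)\subsetneq\Atrac(\varphi)$, by exhibiting a specific point $y\in\Atrac(\varphi)\setminus\Stab(\varphi)$: a point admitting preimages at every finite depth but no infinite backward orbit in $X$. Two observations will guide the construction. First, if $\varphi$ is injective the preimage chain (when it exists at all) is uniquely determined, so $\Stab(\varphi)=\Atrac(\varphi)$ automatically. Second, if $\varphi$ is surjective the axiom of dependent choice gives $\Stab(\varphi)=\Atrac(\varphi)=X$. Hence the desired examples must be both non-injective and non-surjective while still satisfying the additional dense-range or analyticity hypothesis.

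For~(1), the plan is to index an orthonormal basis of a separable Hilbert space by the vertex set of a rooted tree with a root $r$, infinitely many children $v_1,v_2,\dots$ of $r$, and a simple descending path of length exactly $n$ hanging below $v_n$ (with leaves $v_n^n$). Define $T$ as a ``weighted parent'' map: $T(e_r)=0$, $T(e_{v_n^{k}})=e_{v_n^{k-1}}$ for $2\le k\le n$, and $T(e_{v_n^1})=w_n\,e_r$, with a square-summable weight sequence (e.g. $w_n=1/n$) to keep $T$ bounded. The vector $e_r$ satisfies $e_r\in T^m(X)$ for every $m$ since $v_m^m/w_m$ is a $T^m$-preimage of $e_r$, so $e_r\in\Atrac(T)$. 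To show $e_r\notin\Stab(T)$, I would assume a hypothetical infinite backward orbit $(y_j)_{j\ge 0}$ with $y_0=e_r$, propagate coefficients along the chains (so that the component of $y_j$ at $v_n^k$ equals that of $y_{j-k+1}$ at $v_n^1$), and use the fact that each leaf $v_n^n$ is orthogonal to $\operatorname{Im}(T)$ to force all $v_n^1$-coefficients of $y_1$ to vanish, contradicting the relation $\sum_n w_n\,(y_1)^{(v_n^1)}=1$ coming from $T(y_1)=e_r$. To meet the dense-range hypothesis, the basis is then enriched with carefully chosen auxiliary vectors whose role is only to place the leaves in the closure of $\operatorname{Im}(T)$; the delicate point is to select them so that they do not themselves create an infinite backward orbit above $e_r$.

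For~(2), the strategy is analogous in spirit but far more constrained by holomorphic rigidity: by Denjoy--Wolff the iterates of $\varphi$ converge to a single point $p\in\overline{\mathbb{D}}$, and the cases of a M\"obius automorphism (giving $\Stab=\Atrac=\mathbb{D}$) and of an attracting interior fixed point (giving $\Stab=\Atrac=\{p\}$) are easily dispatched. The plan is to build a non-injective, non-surjective inner function, for instance an infinite Blaschke product whose zeros accumulate at a chosen boundary point in a prescribed pattern; the zero distribution is tuned so that a carefully selected $z_0\in\Atrac(\varphi)$ admits preimages at every finite depth (via longer and longer excursions through the cluster of zeros) while the hyperbolic geometry near $\partial\mathbb{D}$ forces any backward orbit of $z_0$ to leave $\mathbb{D}$ in finitely many steps. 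The hardest step in both constructions is the same: because infinitely many preimages exist at each level, no K\"onig-type combinatorial argument is available, and one must exploit the precise structure of the ambient space, namely coefficient incompatibility in $\ell^2$ for~(1) and boundary geometry together with Schwarz--Pick distortion for~(2), to rule out \emph{every} possible threading of preimage choices.
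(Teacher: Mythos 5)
Your overall strategy (exhibit a point with preimages of every finite depth but no infinite backward orbit, modelled on chains of increasing finite length) is indeed the right one and matches the spirit of the paper, but both of your constructions stop short exactly at the hard step, so as it stands this is a plan rather than a proof. For case (1), your tree-indexed operator $T$ is essentially the linearization of the paper's combinatorial counterexample (the paper itself notes this gives a counterexample for general infinite-dimensional vector spaces), and your argument that $e_r\notin\Stab(T)$ is correct \emph{for that} $T$ --- but that $T$ has non-dense range precisely because the leaves $e_{v_n^n}$ are orthogonal to $\operatorname{Im}(T)$, and that orthogonality is the very fact your argument uses. The moment you ``enrich the basis with auxiliary vectors to place the leaves in the closure of $\operatorname{Im}(T)$'', that fact is destroyed by design, and you give no construction and no replacement argument; naively adding preimages of the leaves just lengthens the chains and recreates new leaves outside the closure of the range. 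The paper's actual operator $\hat T$ resolves this by a different mechanism: every chain is \emph{infinite}, with factorially decaying weights and a correction term attached to the bottom vectors, so that every basis vector lies in the range (dense range is automatic), while the obstruction to lifting a preimage of $e_0$ more than finitely many times is not orthogonality but failure of square-summability of the would-be preimage's coefficients (its coordinates along a chain are forced to tend to $1$). This is the content the paper flags as ``not so easy'', and it is missing from your proposal.

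Case (2) is in worse shape: you propose ``an infinite Blaschke product whose zeros accumulate at a boundary point in a prescribed pattern'' with the zero set ``tuned'' so that some $z_0$ is in $\Atrac(\varphi)\setminus\Stab(\varphi)$, but no Blaschke product is specified, no verification that $z_0\in\Atrac(\varphi)$ is given, and the claim that hyperbolic geometry ``forces any backward orbit to leave $\mathbb{D}$ in finitely many steps'' is asserted, not proved --- and ruling out \emph{every} threading of preimages is exactly the hard point you yourself identify. (Also, your preliminary remark that an attracting interior fixed point forces $\Stab=\Atrac=\{p\}$ is false: for $\varphi(z)=z^2$ one has $\Atrac(\varphi)=\Stab(\varphi)=\mathbb{D}$.) The paper's construction, due to Poggi-Corradini, is of a different nature: one lifts multiplication by $1/2$ on the slit half-plane $\{\operatorname{Re} z>0\}\setminus\{2^n\}$ to the universal cover, removes vertical half-slits at the sheets containing suitable preimage points and all of their preimages, and invokes the Uniformization Theorem to identify the resulting surface with $\mathbb{D}$; the slits are what block every infinite backward orbit of the distinguished point while leaving preimages of each finite depth. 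To make your proposal into a proof you would need either to carry out such a surface construction or to genuinely produce and analyze a concrete inner function, neither of which is done.
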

\begin{The} Assume one of the following cases:\label{THintro1}
\begin{enumerate}
\item the set~$X$ is a compact metric space, and~$\varphi$ is a continuous self-map;
\item the set~$X$ is a limit group of free groups, and~$\varphi$ is a group endomorphism;
\item the set~$X$ is the set of finite words (or of left infinite words) over a finite set, and $\varphi$ is a substitution.
\end{enumerate}
Then, the equality $\Stab(\varphi) =  \Atrac(\varphi)$ holds.
\end{The}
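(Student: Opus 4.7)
The common strategy across all three cases reduces the problem to a single ``one-step'' claim: if $x\in\Atrac(\varphi)$, then $\varphi^{-1}(x)\cap\Atrac(\varphi)\neq\emptyset$. Once this is established, a straightforward induction (with countable choice) produces the required backward orbit $(x_n)_{n\ge 0}$, since each preimage again lies in $\Atrac(\varphi)$ and the process iterates. The one-step claim in turn reduces to showing the decreasing family
$$C_n(x) := \varphi^{-1}(x)\cap\varphi^n(X), \qquad n\ge 0,$$
has nonempty intersection. Note that each $C_n(x)$ is automatically nonempty: since $x\in\varphi^{n+1}(X)$, writing $x=\varphi^{n+1}(z)$ gives $\varphi^n(z)\in C_n(x)$. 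What must be shown is that the decreasing intersection $\bigcap_n C_n(x)=\varphi^{-1}(x)\cap\Atrac(\varphi)$ is nonempty.

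In case~(1), compactness handles this immediately. Since $\varphi$ is continuous and $X$ is compact metric, each $\varphi^n(X)$ is compact hence closed, and $\varphi^{-1}(x)$ is closed; so $C_n(x)$ is a nonempty closed subset of a compact space. The finite intersection property yields $\bigcap_n C_n(x)\neq\emptyset$.

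Case~(3) splits in two. For finite words over a finite alphabet $A$, assuming the substitution $\varphi$ is non-erasing, one has $|\varphi(u)|\ge|u|$ for every $u\in A^*$; hence $\varphi^{-1}(w)$ is finite for each $w\in A^*$, making $(C_n(w))_n$ a decreasing sequence of nonempty finite sets, whose intersection is therefore nonempty. For left-infinite words, I would equip $A^{-\mathbb{N}}$ with the product topology: it becomes a compact metrizable space, and the natural extension of $\varphi$ is continuous because each suffix of $\varphi(w)$ depends only on finitely many letters of $w$. Case~(1) then applies directly.

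Case~(2) is the one I expect to be genuinely harder, since no ambient compactness is available. Here my plan is to exploit the Hopfian-type and residual properties of limit groups of free groups. Setting $H:=\bigcap_n\varphi^n(X)=\Atrac(\varphi)$, one observes that $\varphi(H)\subseteq H$ and aims to prove the stronger statement $\varphi(H)=H$, which is equivalent to the one-step claim restricted to $H$. The strategy is to leverage Hopfianity of free (and more generally limit) groups together with an equationally-Noetherian / descending-chain property on the tower $\varphi^n(X)$, so that the preimages $y_n\in\varphi^{-1}(x)\cap\varphi^n(X)$ can be selected compatibly and shown to force an element of $\varphi^{-1}(x)$ into $H$. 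The technical obstacle is that $H$ need not be finitely generated and the chain $\varphi^n(X)$ need not stabilize as a sequence of subgroups; the hard part is extracting enough finiteness from the residually-free structure of $X$ to make the diagonal/coset argument go through.
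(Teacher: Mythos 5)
Your general reduction (the one-step claim $\varphi^{-1}(x)\cap\Atrac(\varphi)\neq\emptyset$ plus induction) is sound, and your treatment of case~(1) is correct --- it is a finite-intersection-property variant of the paper's argument, which instead extracts convergent subsequences from finite backward chains; both are fine. The problems are in cases~(2) and~(3). For case~(3) you only prove the statement for \emph{non-erasing} substitutions: for finite words you assume non-erasingness explicitly (if some letter maps to the empty word, $\varphi^{-1}(w)$ can be infinite and your finiteness argument collapses), and for infinite words your compactness reduction also needs it, since an erasing substitution does not even map $S^\omega$ to itself --- one must restrict to the subset $S_\infty^\omega$ of words containing infinitely many immortal letters, which is not compact, so case~(1) does not apply. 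The erasing case is precisely where the paper's work lies: it introduces mortal letters and the mortality exponent, proves a fixed-point lemma for letters $a$ with $\varphi^r(a)=V_1aV_2$, $\ell_\infty(V_1)=0$, and runs a pigeonhole argument on the first immortal letters of the backward orbit to show $\Atrac(\varphi_\omega)=\Orb(\varphi_\omega)$ (the finite-word case is then deduced by adding a dummy fixed letter). None of this is present in your proposal, so case~(3) as stated is not established.

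For case~(2) you offer a plan rather than a proof, and the obstacles you name are not the right ones: neither finite generation of $H=\Atrac(\varphi)$ nor stabilization of the subgroup chain $\varphi^n(X)$ is needed. The paper's route is short: each $\varphi^n(X)$ is a finitely generated subgroup of a limit group, hence itself a limit group, and $\varphi$ gives a sequence of epimorphisms $\varphi^n(X)\twoheadrightarrow\varphi^{n+1}(X)$; by the stabilization theorem for sequences of epimorphisms of limit groups (Champetier--Guirardel, Prop.~3.13; Sela, Prop.~5.1) these maps are eventually injective, i.e.\ $\varphi$ is injective on some $\varphi^n(X)$, and then the paper's Lemma~\ref{lemfac2} (injectivity on some $\varphi^n(X)$ implies $\Stab(\varphi)=\Atrac(\varphi)$) concludes; for free groups one can replace the citation by monotonicity of the rank of $\varphi^n(F_m)$ together with hopficity. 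Your proposal gestures at Hopfian and chain conditions but does not identify this mechanism, and explicitly leaves the key step open, so case~(2) is a genuine gap as well.
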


 The paper is organized as it follows, in~Section~2 we present easy examples where the equality either holds or does not hold. We also explain the connection between the stable set and the inverse limit of a self-map. In Sections~\ref{sectespvect} and~\ref{secteanalyt}, we focus on the cases considered in Theorem~\ref{THintro2}. In Sections~\ref{sectcompa},~\ref{sectfree} and~\ref{sectsubsti}, we turn to the cases considered in Theorem~\ref{THintro1}. We also investigate the question of deciding whether or not the other inclusions of the sequence~(\ref{suiteinclusions}) are actually equalities. Finally, in Section~\ref{sectgenera} we replace the map~$\varphi$ by a monoid generated by a given set of self-maps, and introduce a generalized definition of the notion of a self-map. Then, we provided motivating examples for such a definition. 

\section{Example, counter-example and inverse limit}
\label{section:example}
Let us start with a simple counter example to the equality, and obvious cases where the equality holds.
\begin{Exe} Set~$X = \{(n,m)\in \mathbb{Z}^2 \mid 0\leq m\leq \max(n-1,0)\}$ and let $\varphi : X\to X$ be defined by $\varphi(n,m) = (n,m-1)$ for positive~$m$, and $\varphi(n,0) = (min (n-1,0), 0)$. The stable set of $\varphi$ is empty whereas the attracting set is~$\{(n,0)\mid n\leq 0\}$.  \label{lem:contreexemple}   
 \begin{figure}[ht]
\begin{picture}(200,100)
\put(18,0){\includegraphics[scale = 0.6]{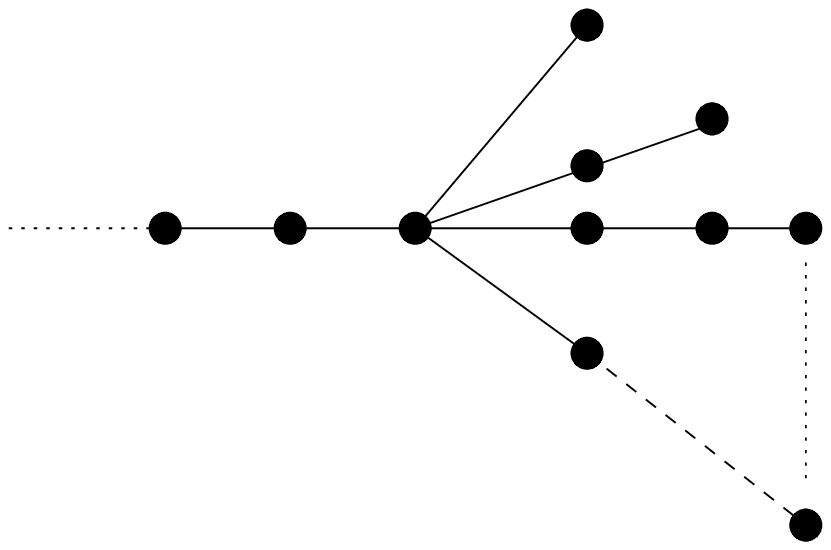}}
\put(30,44){{\small $(-2,0)$}}\put(52,60){{\small $(-1,0)$}}\put(75,43){{\small $(0,0)$}} 
 \put(122,90){{\small $(1,0)$}} \put(109,71){{\small $(2,0)$}} \put(144,73){{\small $(2,1)$}} \put(108,44){{\small $(3,0)$}}   
\put(133,44){{\small $(3,1)$}} \put(162,53){{\small $(3,2)$}} \put(105,20){{\small $(n,0)$}}\put(162,1){{\small $(n,n-1)$}}  
\end{picture}
\caption{A counter example}\label{figurecontreexemple}
\end{figure}
\end{Exe}

The set~$\fvie{\varphi}$ can be characterized as a maximal subset: 
\begin{Lem}Let $X$ be a set and $\varphi : X\to X$ be a self-map. 
 One has $$\varphi(\fvie{\varphi}) = \fvie{\varphi}.$$ Furthermore,~$\fvie{\varphi}$ contains every~subset~$Y$ of $X$ such that~$\varphi(Y) = Y$.\label{lemfac1}
\end{Lem}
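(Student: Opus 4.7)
The plan is to unfold the definition of $\fvie{\varphi}$ and verify the two assertions separately by elementary index manipulation; no serious obstacle appears, so the writeup should be short.

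First I would establish the double inclusion $\varphi(\fvie{\varphi}) = \fvie{\varphi}$. For the inclusion $\fvie{\varphi} \subseteq \varphi(\fvie{\varphi})$, I pick $x \in \fvie{\varphi}$ with an associated backward sequence $(x_n)_{n \geq 0}$ satisfying $x_0 = x$ and $\varphi(x_{n+1}) = x_n$. Then $x_1$ is itself a distinguished element, and the shifted sequence $z_n := x_{n+1}$ is a backward sequence starting at $x_1$, showing $x_1 \in \fvie{\varphi}$; since $\varphi(x_1) = x_0 = x$, this gives $x \in \varphi(\fvie{\varphi})$. Conversely, to prove $\varphi(\fvie{\varphi}) \subseteq \fvie{\varphi}$, I take $x \in \fvie{\varphi}$ with backward sequence $(x_n)$, and prepend $\varphi(x)$: the sequence $y_0 := \varphi(x)$, $y_{n+1} := x_n$ for $n \geq 0$ satisfies $\varphi(y_1) = \varphi(x_0) = \varphi(x) = y_0$ and $\varphi(y_{n+1}) = \varphi(x_n) = x_{n-1} = y_n$ for $n \geq 1$, so $\varphi(x) \in \fvie{\varphi}$.

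For the maximality clause, I consider an arbitrary $Y \subseteq X$ with $\varphi(Y) = Y$ and any $y \in Y$. Because $\varphi$ maps $Y$ onto $Y$, I can recursively pick preimages inside $Y$: setting $y_0 := y$, and, once $y_n \in Y$ is defined, choosing $y_{n+1} \in Y$ with $\varphi(y_{n+1}) = y_n$ (such a choice exists since $y_n \in Y = \varphi(Y)$). This produces a backward sequence witnessing $y \in \fvie{\varphi}$, hence $Y \subseteq \fvie{\varphi}$.

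The only delicate point is the use of the axiom of choice (or dependent choice) in the recursive selection of preimages in the second part; this is standard and needs no more than a brief mention. Combined with the first assertion, the lemma indeed characterizes $\fvie{\varphi}$ as the largest subset of $X$ on which $\varphi$ restricts to a surjection.
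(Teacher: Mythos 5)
Your proof is correct: the shift/prepend manipulations give both inclusions of $\varphi(\fvie{\varphi}) = \fvie{\varphi}$, and the recursive choice of preimages (dependent choice) handles the maximality clause. The paper leaves this lemma to the reader as obvious, and your writeup is exactly the standard argument that is intended.
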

This implies that for every self-map~$\psi :X\to X$ such that $\psi(\fvie{\varphi}) = \fvie{\varphi}$, one has $\fvie{\varphi}\subseteq \fvie{\psi}$. In particular, for every positive integer~$n$, one has $\fvie{\varphi^n} = \fvie{\varphi}$.
\begin{Lem} Let $X$ be a set and $\varphi : X\to X$ be a self-map. If $\varphi$ is either injective on some~$\varphi^n(X)$ or surjective, then $\Stab(\varphi) =  \Atrac(\varphi)$. Furthermore, in the former case the restriction~$\varphi : \fvie{\varphi}\to \fvie{\varphi}$ is bijective.\label{lemfac2} 
\end{Lem}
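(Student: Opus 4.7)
The plan is to exploit the already-established inclusion $\Stab(\varphi)\subseteq\Atrac(\varphi)$, so that in each case only the reverse inclusion $\Atrac(\varphi)\subseteq\Stab(\varphi)$ needs to be established; the construction of a coherent backward orbit is the real content.

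If $\varphi$ is surjective, first I would observe that $\varphi^n(X)=X$ for every $n$, so $\Atrac(\varphi)=X$. Given any $x\in X$, I would then build the required sequence $(x_n)$ inductively: set $x_0=x$, and for each $n\geq 0$ pick some $x_{n+1}\in\varphi^{-1}(\{x_n\})$, which is non-empty by surjectivity. This immediately gives $x\in\Stab(\varphi)$.

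The injective case is the main point and requires a little more care. I first claim that if $\varphi$ is injective on $\varphi^n(X)$, then $\varphi^k$ is injective on $\varphi^n(X)$ for every $k\geq 1$. This is a short induction on $k$: the nested inclusion $\varphi^{n+1}(X)\subseteq\varphi^n(X)$ ensures that successive images of elements of $\varphi^n(X)$ remain in $\varphi^n(X)$, so one can pull equalities back one $\varphi$-step at a time using the base hypothesis. Now let $y\in\Atrac(\varphi)$. For every $k\geq 0$, since $y\in\varphi^{n+k}(X)$, I can choose $w_k\in X$ with $\varphi^{n+k}(w_k)=y$, and set $x_k=\varphi^n(w_k)\in\varphi^n(X)$. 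Then $\varphi^k(x_k)=y$ for every $k$, so in particular $x_0=y$. Finally, $\varphi(x_{k+1})$ and $x_k$ both lie in $\varphi^n(X)$ and have the same image under $\varphi^k$, namely $y$; injectivity of $\varphi^k$ on $\varphi^n(X)$ forces $\varphi(x_{k+1})=x_k$. Hence $y\in\Stab(\varphi)$.

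For the "furthermore" part, the surjectivity of the restriction $\varphi:\Stab(\varphi)\to\Stab(\varphi)$ is exactly the identity $\varphi(\Stab(\varphi))=\Stab(\varphi)$ provided by Lemma~\ref{lemfac1}, while injectivity is free, because $\Stab(\varphi)\subseteq\Atrac(\varphi)\subseteq\varphi^n(X)$ and $\varphi$ is injective on $\varphi^n(X)$ by assumption. The only genuine obstacle in the argument above is promoting injectivity from $\varphi$ to $\varphi^k$ on $\varphi^n(X)$, as this is what makes the inductive choice of preimages $(x_k)$ automatically coherent rather than requiring the axiom of choice combined with a diagonal argument.
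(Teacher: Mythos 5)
Your proof is correct; the paper itself declares this lemma ``obvious'' and leaves it to the reader, and your argument is exactly the natural one that is intended: the surjective case via iterated choice of preimages, and the injective case by promoting injectivity of $\varphi$ on $\varphi^n(X)$ to injectivity of the iterates $\varphi^k$ there, which forces any independently chosen preimages $x_k=\varphi^n(w_k)$ to form a coherent backward orbit, with the ``furthermore'' part following from Lemma~\ref{lemfac1} together with $\Stab(\varphi)\subseteq\varphi^n(X)$.
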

The proof of Lemma~\ref{lemfac1} and~\ref{lemfac2} is obvious, and let to the reader. We end this section by explaining the connection between the inverse limit of a self-map and its stable set. Let us recall the definition of the former.
\begin{definition} Let $P$ be a poset and~$(X_i)_{i\in P}$ be a family of sets. Assume that for each pair $(i,j)$ of $P$ such that $i\leq j$ we have a map~$\varphi_{i,j} : X_j\to X_i$ such that $\varphi_{i,i} = Id_{X_i}$ and $\varphi_{i,j}\circ \varphi_{j,k} = \varphi_{i,k}$  for $i\leq j\leq k$. Then, the inverse limit~$\underleftarrow{\lim}(X_i)$ of the \emph{projective system}~$(X_i, \varphi_{i,j})$ is the set $$\{(x_i)_{i\in P}\mid x_i\in X_i,\ \varphi_{i,j}(x_j) = x_i \}.$$   
\end{definition}
Now, for each index~$j$, we have projection maps~$\pi_j: \underleftarrow{\lim}(X_i)\to X_j, (x_i)_{i\in P}\mapsto x_j$. If we choose $P = \mathbb{N}$, $X_i = X$ and $\varphi_{i,j} = \varphi$ for every~$i,j$, then by definition ~$\fvie{\varphi}$ is equal to~$\pi_0(\underleftarrow{\lim}(X_i))$ ( $= \pi_j(\underleftarrow{\lim}(X_i))$). 

\section{Vector spaces and linear maps}
\label{sectespvect} Here we consider the case of a linear self-map on a vector space. 
One can see without difficulty that the equality $\fvie{\varphi} = \Atrac(\varphi)$ does not hold in general for infinite dimensional vector spaces: Consider Example~\ref{lem:contreexemple} and its notations; let~$V$ be the vector space with base~$X$, and extend~$\varphi$ from~$X$ to~$V$ by linearity. Nevertheless, there is a case where the equality always holds.    
\begin{Prop} \label{Prop:infdim}Let $V$ be an infinite dimensional vector space, and $\varphi: V\to V$ be a linear map. If the sequence $(Ker(\varphi^n))$ eventually stabilizes then $$\fvie{\varphi} = \Atrac(\varphi).$$ 
\end{Prop}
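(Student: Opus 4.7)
The plan is to reduce immediately to Lemma~\ref{lemfac2}, which already tells us that injectivity of $\varphi$ on some iterate $\varphi^n(V)$ suffices to get $\Stab(\varphi) = \Atrac(\varphi)$. So the whole content of the proposition will be the elementary observation that the stabilization of the kernel sequence forces precisely such an injectivity.

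First I would fix $N$ with $\ker(\varphi^n) = \ker(\varphi^N)$ for every $n \geq N$, which exists by hypothesis. The key step is then to show that $\varphi$ is injective on $\varphi^N(V)$. Take $v \in \varphi^N(V)$ with $\varphi(v) = 0$, write $v = \varphi^N(w)$ for some $w \in V$. Then $\varphi^{N+1}(w) = \varphi(v) = 0$, so $w \in \ker(\varphi^{N+1}) = \ker(\varphi^N)$, and therefore $v = \varphi^N(w) = 0$. This proves that the restriction of $\varphi$ to $\varphi^N(V)$ has trivial kernel, hence is injective.

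Once we have this, the conclusion is immediate from Lemma~\ref{lemfac2}: since $\varphi$ is injective on $\varphi^N(V)$, we get $\Stab(\varphi) = \Atrac(\varphi)$, and as a bonus the restriction $\varphi : \Stab(\varphi) \to \Stab(\varphi)$ is a bijection.

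No step here is really an obstacle; the only subtle point is realising that one does not need the image sequence $\varphi^n(V)$ to stabilize (which would be the hypothesis of Fitting's lemma in the finite dimensional setting), but only the kernel sequence. The infinite-dimensional setting is what makes the hypothesis substantive: in finite dimension, the stabilisation of $\ker(\varphi^n)$ is automatic and is equivalent to $\varphi$ being surjective on $\Atrac(\varphi)$, whereas here one gets an honest restriction on $\varphi$, and the counter-example obtained from Example~\ref{lem:contreexemple} by linear extension is precisely one in which the kernel sequence does not stabilize.
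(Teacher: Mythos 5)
Your argument is correct and is essentially the same as the paper's: both show that stabilization of the kernel sequence forces $\varphi$ to be injective on some $\varphi^N(V)$ (via $v=\varphi^N(w)$, $\varphi(v)=0$ implies $w\in Ker(\varphi^{N+1})=Ker(\varphi^N)$, hence $v=0$), and then conclude by Lemma~\ref{lemfac2}. Nothing to add.
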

Note that we may have $\fvie{\varphi} = \Atrac(\varphi)$ even if the sequence $(Ker(\varphi^n))$ does not stabilize. The above result applies in particular when $V$ is a finite dimensional vector space. 
\begin{proof}
Assume $Ker(\varphi^{n+1}) = Ker(\varphi^n)$ for some~$n$. Let~$y$ belong to $Im(\varphi^n)$ and be such that $\varphi(y) = 0$. Choose $x$ such that $\varphi^n(x) = y$. Then $\varphi^{n+1}(x) = 0$, and therefore $ y  = \varphi^n(x) = 0$. Hence,~$\varphi$ is injective on~$Im(\varphi^n)$ and we conclude using Lemma~\ref{lemfac2}. 
\end{proof}

Among infinite dimensional vector spaces, the case of Hilbert spaces is of importance. The notion of a linear continuous self-map with dense range on an infinite dimensional Hilbert space is crucial in Operator Theory. If~$H$ is an Hilbert space (with a countably infinite base) and $T: H \to H$ is a linear continuous self-map, then we know by~\cite{Est,Ste} that $\fvie{T}$ is non-empty, since $H$ is a complete metric space. A simple example of such a map is the following: 
  
\begin{Exe} \label{lem:infdim2} Let~$H$ be the Hilbert space~$\ell^2(\mathbb{C})$ and fix an Hilbert base : $(e_k)_{k\geq 0}$. We define $T: H\to H$ by $T(\sum \lambda_k e_k) = \sum\frac{\lambda_{k+1}}{k+1} e_k$. This map is clearly not injective, continuous and compact with dense range. One can see that~$T$ is not surjective since the element~$\sum\frac{1}{k+1} e_k$ does not have a pre-image. Finally, the kernel of~$\varphi$ is generated by~$e_1$, hence, $\Atrac(T) = \fvie{T}$ by Proposition~\ref{Prop:infdimker1} below.
\end{Exe}

\begin{Prop} \label{Prop:infdimker1}Let $V$ be an infinite dimensional vector space, and $\varphi: V\to V$ be a linear map such that $dim(Ker(\varphi)) = 1$. Then, $$\fvie{\varphi} = \Atrac(\varphi).$$ 
\end{Prop}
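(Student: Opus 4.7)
The plan is to reduce everything to showing $\varphi(\Atrac(\varphi)) = \Atrac(\varphi)$: once this is established, Lemma~\ref{lemfac1} applied to $Y = \Atrac(\varphi)$ forces $\Atrac(\varphi)\subseteq \fvie{\varphi}$, and the reverse inclusion is already known from~(\ref{suiteinclusions}). The inclusion $\varphi(\Atrac(\varphi))\subseteq \Atrac(\varphi)$ is immediate since $\varphi(\bigcap_n \varphi^n(V)) \subseteq \bigcap_n \varphi^{n+1}(V) = \Atrac(\varphi)$, so the real work is surjectivity on $\Atrac(\varphi)$.

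Fix $y \in \Atrac(\varphi)$. The fibre $\varphi^{-1}(y)$ is non-empty (take $n=1$) and is a coset of $\ker(\varphi)$, hence an affine subspace of dimension exactly $1$, i.e.\ an affine line $L$ in $V$. For each $n\geq 1$, the hypothesis $y\in \varphi^n(V)$ produces $z_n\in V$ with $\varphi^n(z_n) = y$, so $\varphi^{n-1}(z_n)\in L\cap \varphi^{n-1}(V)$. Setting $F_n = L\cap \varphi^{n-1}(V)$, we get a nested decreasing family $F_1\supseteq F_2\supseteq \cdots$ of non-empty affine subspaces of $L$. Since $F_n$, if non-empty at some point $x_0$, equals $x_0 + (\ker(\varphi)\cap \varphi^{n-1}(V))$ and $\ker(\varphi)\cap \varphi^{n-1}(V)$ is a subspace of the $1$-dimensional $\ker(\varphi)$, each $F_n$ is either a single point or all of $L$.

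We now split into two cases. If every $F_n$ equals $L$, then $L \subseteq \bigcap_{n\geq 1}\varphi^{n-1}(V) = \Atrac(\varphi)$ and any $x\in L$ gives a preimage of $y$ inside $\Atrac(\varphi)$. Otherwise, some $F_{n_0} = \{x_0\}$ is a single point; by the nesting and non-emptiness of the $F_n$, every $F_n$ for $n\geq n_0$ must also equal $\{x_0\}$, so $x_0 \in \varphi^{n-1}(V)$ for all $n\geq 1$, i.e.\ $x_0\in \Atrac(\varphi)$ and $\varphi(x_0) = y$. In both cases, $y$ admits a preimage in $\Atrac(\varphi)$, which completes the proof.

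The only delicate point is the dichotomy in the previous paragraph, which crucially uses the hypothesis $\dim(\ker(\varphi)) = 1$: it is exactly what forces the affine sets $F_n$ to be either points or the full line $L$, so that a decreasing non-empty sequence is eventually constant. Everything else is bookkeeping, and in particular no topology or completeness of $V$ is invoked.
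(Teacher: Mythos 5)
Your proof is correct, and it is organized differently from the paper's. The paper splits globally according to whether $Ker(\varphi)$ is contained in $\Atrac(\varphi)$: if not, then $Ker(\varphi)\cap Im(\varphi^{n})=\{0\}$ for some $n$, so $\varphi$ is injective on $Im(\varphi^{n})$ and Lemma~\ref{lemfac2} concludes; if so, it shows that any $\varphi$-preimage $x$ of a point of $\Atrac(\varphi)$ already lies in $\Atrac(\varphi)$ (such an $x$ differs from some $\varphi^{n-1}(x_n)$ by a kernel element, which in that case lies in $Im(\varphi^{n-1})$), and then invokes Lemma~\ref{lemfac1}. You instead prove $\varphi(\Atrac(\varphi))=\Atrac(\varphi)$ uniformly, by analysing for each $y\in\Atrac(\varphi)$ the nested non-empty affine sets $F_n=\varphi^{-1}(y)\cap\varphi^{n-1}(V)$ and exploiting the point-or-line dichotomy forced by $\dim Ker(\varphi)=1$. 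Note that your dichotomy is the paper's in disguise: $F_n$ equals the whole line $L$ exactly when $Ker(\varphi)\subseteq\varphi^{n-1}(V)$, independently of $y$, so ``all $F_n=L$'' is the paper's former case and ``some $F_{n_0}$ is a point'' is its latter case. What your route buys is that Lemma~\ref{lemfac2} is never needed --- the eventual constancy of the nested $F_n$ replaces the injectivity-on-a-tail argument --- and the whole conclusion passes through Lemma~\ref{lemfac1} alone, making explicit exactly where the hypothesis $\dim Ker(\varphi)=1$ enters; the paper's version is a bit shorter given its toolkit and records in passing the extra information (from Lemma~\ref{lemfac2}) that $\varphi$ restricts to a bijection of $\fvie{\varphi}$ in the injective case. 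Both arguments are equally elementary and use no topology or completeness.
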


\begin{proof}
Since $dim(Ker(\varphi)) = 1$, either $Ker(\varphi)$ is included in~$\Atrac(\varphi)$, or $\varphi$ is injective on some $Im(\varphi^n)$. In the latter case, we apply Lemma~\ref{lemfac2}. Assume the former case holds. Then we claim that $\varphi(\Atrac(\varphi)) = \Atrac(\varphi)$. Clearly, $\varphi(\Atrac(\varphi))$ is included in~$\Atrac(\varphi)$. Conversely, assume~$\varphi(x)$ belongs to~$\Atrac(\varphi)$. For every positive integer~$n$, let $x_n$ be such that~$\varphi^n(x_n) = \varphi(x)$. Then $\varphi^{n-1}(x_n)-x$ belong to $Ker(\varphi)$ and, therefore, belongs to $Im(\varphi^{n-1})$. Thus $x$ belongs to~$Im(\varphi^{n-1})$. The claim follows, and we can apply Lemma~\ref{lemfac1} to conclude. 
\end{proof}
One may wonder whether or not the equality~$\Atrac(T) = \fvie{T}$ holds for every linear continuous self-map with dense range on a countably infinite-dimensional Hilbert space. Actually, it is not the case, even if it is not so easy to obtain a counterexample (see Proposition~\ref{prop:contrehilbert}). We build it up using Example~\ref{lem:infdim2}. 
\begin{Lem} We define a bijection~$\alpha : (\mathbb{N}\setminus\{0\})^2\to \mathbb{N}\setminus\{0\}$  by setting $$\alpha(k,n) = \frac{k(k+1)}{2}+ \frac{(n-1)(2k+n-2)}{2}.$$  
\end{Lem}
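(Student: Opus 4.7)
The plan is to recognize this formula as (essentially) the classical Cantor diagonal enumeration of $(\mathbb{N}\setminus\{0\})^2$ and to prove bijectivity by an explicit inverse. The first step is to simplify the expression by introducing the "diagonal index" $d = k+n-1$. A direct algebraic expansion of the given formula yields
\[
\alpha(k,n) \;=\; \frac{(k+n-2)(k+n-1)}{2} + k \;=\; \frac{(d-1)d}{2} + k,
\]
which rewrites $\alpha$ as "the sum of the sizes of all earlier diagonals, plus the position $k$ on the current diagonal $d$".

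From this reformulation, for fixed $d \geq 1$ the map $k \mapsto \alpha(k, d+1-k)$ is the obvious bijection from $\{1, \ldots, d\}$ onto the block of consecutive integers
\[
B_d = \left\{ \tfrac{(d-1)d}{2}+1,\ \tfrac{(d-1)d}{2}+2,\ \ldots,\ \tfrac{d(d+1)}{2} \right\}.
\]
Since the blocks $B_d$ have cardinality $d$ and successive blocks are adjacent (the largest element of $B_d$ is $\tfrac{d(d+1)}{2}$, the smallest element of $B_{d+1}$ is $\tfrac{d(d+1)}{2}+1$), the family $(B_d)_{d\geq 1}$ partitions $\mathbb{N}\setminus\{0\}$.

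To establish surjectivity and injectivity simultaneously I would exhibit the inverse: given $m \in \mathbb{N}\setminus\{0\}$, let $d$ be the unique positive integer satisfying $\tfrac{(d-1)d}{2} < m \leq \tfrac{d(d+1)}{2}$ (existence and uniqueness follow from the strict monotonicity of $d \mapsto \tfrac{d(d+1)}{2}$ and the partition above), then set $k = m - \tfrac{(d-1)d}{2}$ and $n = d+1-k$. One checks that $1 \leq k \leq d$, hence $n \geq 1$, and that $\alpha(k,n) = m$; conversely, if $\alpha(k,n) = \alpha(k',n')$ then both pairs lie on the same diagonal (same $d$) because the blocks $B_d$ are disjoint, and the identity $k + \tfrac{(d-1)d}{2} = k' + \tfrac{(d-1)d}{2}$ forces $k=k'$ and hence $n=n'$.

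There is no real obstacle here; the only point to be careful with is the algebraic identity rewriting the given expression in terms of $d$, which is a routine expansion of $(k+n-2)(k+n-1)$ that must be matched against $k(k+1) + (n-1)(2k+n-2)$.
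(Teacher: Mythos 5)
Your proof is correct, and it is essentially the paper's argument made rigorous: the identity $\alpha(k,n)=\tfrac{(k+n-2)(k+n-1)}{2}+k$ (which I checked expands correctly) is exactly the diagonal enumeration that the paper merely exhibits through its table of values $\alpha(k,n)$ for small $k,n$. Your explicit inverse via the blocks $B_d$ supplies the bijectivity verification that the paper leaves to inspection of that table, so nothing further is needed.
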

\begin{proof}
$$\begin{array}{|c|c|c|c|c|c|c|}\hline &n=1&n=2&n=3&n=4&\cdots&n\\\hline k = 1&1&2&4&7&&\alpha(1,n)\\\hline k= 2&3&5&8&12&&\alpha(2,n)\\\hline k = 3&6&9&13&18&&\alpha(3,n)\\\hline\vdots&&&&&&\vdots\\\hline k&\alpha(k,1)&\alpha(k,2)&\alpha(k,3)&\alpha(k,4)&\cdots&\alpha(k,n)\\\hline
  \end{array}$$
\end{proof}
Let~$H$ be the Hilbert space~$\ell^2(\mathbb{C})$ and fix an Hilbert base~$(e_k)_{k\geq 0}$. We define the linear map $\hat{T} : H\to  H$ by setting $$\left\{\begin{array}{lcll}\hat{T}(e_0)&=&0 ;\\\hat{T}(e_{\alpha(k,1)})&=&\frac{1}{k^2}e_0 -\sum_{n\geq 1}\frac{1}{(n+1)\cdots(n+k+1)}e_{\alpha(k,n)}&k\geq 1;\\\hat{T}(e_{\alpha(k,n+1)})&=&\frac{1}{n+1}e_{\alpha(k,n)}&k,n\geq 1. \end{array}\right.$$

We are going to prove that
\begin{Prop}\label{prop:contrehilbert} The map~$\hat{T}$ is a linear continuous self-map with dense range, and $\Atrac(\hat{T})\neq \fvie{\hat{T}}$.  
\end{Prop}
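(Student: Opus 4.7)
The plan has four steps: continuity of $\hat T$, density of the range, the inclusion $e_0\in\Atrac(\hat T)$, and the non-inclusion $e_0\notin\fvie{\hat T}$; the last step is where the real work lies.

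For continuity I would decompose $\hat T=T_0+T_1+T_2$, with $T_0$ the chainwise shift $e_{\alpha(k,n+1)}\mapsto\frac{1}{n+1}e_{\alpha(k,n)}$ (vanishing on $e_0$ and on every $e_{\alpha(k,1)}$), $T_1$ the rank-one operator $x\mapsto\langle x,\sum_{k\geq 1}\frac{1}{k^2}e_{\alpha(k,1)}\rangle\,e_0$, and $T_2$ the map sending each $e_{\alpha(k,1)}$ to $-\sum_{n\geq 1}\frac{1}{(n+1)\cdots(n+k+1)}e_{\alpha(k,n)}$ and killing the other basis vectors. Then $\|T_0\|\leq 1$, $T_1$ is bounded of rank one, and $T_2$ is bounded because $\sum_n\frac{1}{((n+1)\cdots(n+k+1))^2}$ is majorised by $\sum_n\frac{1}{(n+1)^{2k+2}}$, uniformly bounded in $k\geq 1$. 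For the dense range, every basis vector lies in the image of $\hat T$: one has $e_{\alpha(k,n)}=(n+1)\hat T(e_{\alpha(k,n+1)})$, and for every $k\geq 1$
\begin{equation*}
e_0=k^2\,\hat T\!\left(e_{\alpha(k,1)}+\sum_{n\geq 1}\frac{e_{\alpha(k,n+1)}}{(n+2)\cdots(n+k+1)}\right),
\end{equation*}
the argument being square-summable since the denominator has $k$ factors.

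To establish $e_0\in\Atrac(\hat T)$, I would iterate the above identity inside the chain $V_k:=\overline{\mathrm{span}\{e_{\alpha(k,m)}:m\geq 1\}}$. Each additional back-step consumes one factor in the denominator, so, for $k\geq n$, the $n$-fold iterated pre-image remains in $\ell^2$ and provides a vector $y_n$ with $\hat T^n(y_n)=e_0$.

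The heart of the proof is $e_0\notin\fvie{\hat T}$. Suppose, for contradiction, that $(w^{(j)})_{j\geq 1}$ is a backward orbit with $\hat T(w^{(1)})=e_0$ and $\hat T(w^{(j+1)})=w^{(j)}$, and write $w^{(j)}=c^{(j)}_0 e_0+\sum_{k,n\geq 1}c^{(j)}_{k,n}e_{\alpha(k,n)}$. Projecting these equalities onto each basis vector yields the identity $\sum_k c^{(1)}_{k,1}/k^2=1$ together with the recursion
\begin{equation*}
c^{(j+1)}_{k,n+1}=(n+1)\,c^{(j)}_{k,n}+\frac{c^{(j+1)}_{k,1}}{(n+2)\cdots(n+k+1)}.
\end{equation*}
Unrolling expresses $c^{(j)}_{k,m}=\sum_{i=1}^{j}c^{(i)}_{k,1}\,r^{(i,j)}_{k,m}$, where $r^{(i,j)}_{k,m}$ is an explicit ratio of two falling factorials; a direct computation shows the borderline identity $r^{(i,i+k)}_{k,m}=1$ and the decay $r^{(i,j)}_{k,m}=O(1/m^{\,i+k-j})$ whenever $i+k>j$. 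Specialising to $j=k+1$ gives $c^{(k+1)}_{k,m}=c^{(1)}_{k,1}+O(1/m)$ as $m\to\infty$, so $w^{(k+1)}\in\ell^2$ forces $c^{(1)}_{k,1}=0$ for every $k\geq 1$, in contradiction with $\sum_k c^{(1)}_{k,1}/k^2=1$. The main obstacle is precisely this asymptotic analysis: unrolling the recursion carefully enough to obtain the exact value $r^{(i,i+k)}_{k,m}=1$, and verifying that the remainder contributed by the generations $i\geq 2$ really is $O(1/m)$ and therefore cannot mask a nonzero $c^{(1)}_{k,1}$.
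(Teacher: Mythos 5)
Your proposal is correct and follows essentially the same route as the paper: both exhibit $e_0$ as an element of $\Atrac(\hat T)\setminus\fvie{\hat T}$ by exploiting the chains indexed by $k$, showing that $e_0$ has backward orbits of every finite length (via chain $k\geq n$), while any single preimage of $e_0$ must load some chain $k$ with a nonzero coefficient $c^{(1)}_{k,1}$, which after $k$ further backward steps reappears as a non-decaying tail $c^{(1)}_{k,1}+O(1/m)$, contradicting square-summability. The paper packages the same computation through the auxiliary vectors $f_{k,j}$ (with $f_{k,k}\notin Im(\hat T)$) and an induction on the form of iterated preimages of $e_0$, whereas you unroll the coefficient recursion directly; your decomposition for continuity, the density argument, and the asymptotics $r^{(i,i+k)}_{k,m}=1$, $r^{(i,j)}_{k,m}=O(1/m^{i+k-j})$ all check out.
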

The reader may note that for each $k$ the restriction of $\hat{T}$ to the Hilbert subspace of $H$ generated by the set~$\{e_{\alpha(k,n)}\mid n\geq 1\}\cup \{e_0\}$ is closed to the definition of the map~$T:H\to H$ of Example~\ref{lem:infdim2}. In the sequel, we denote by $f_k$ the element~$\sum a_ne_n$ of~$H$ defined by~$a_{\alpha(k,n)} = \frac{1}{(n+1)\cdots(n+k)}$,  $a_{\alpha(j,n)} = 0$ for $j\neq k$ and~$a_0 = 0$.

\begin{proof} The map~$\hat{T}$ is linear by definition. It is continuous since~$\||\hat{T}\|| \leq \frac{\pi^2}{6} + \frac{\sqrt{6}\pi}{3}$. Indeed, $\|\hat{T}(\sum a_ne_n)\| \leq \sum_{k = 1}^{+\infty} \frac{|a_{\alpha(k,1)}|}{k^2} + \sqrt{\sum_{k = 1}^{+\infty} |a_{\alpha(k,1)}|^2\sum_{n = 1}^{+\infty}\frac{1}{(n+1)^2\cdots (n+k+1)^2}} +\sqrt{\sum_{n = 1}^{+\infty}\frac{1}{(n+1)^2}\sum_{k = 1}^{+\infty}|a_{\alpha(k,n+1)}|^2}$. It has dense range because all the $e_i$ belong to the image of~$\hat{T}$. Finally, the last part of the proposition is a direct consequence of the two following lemmas.
 
\end{proof}

\begin{Lem}\label{lemcalculker}
Let~$g = \sum a_ne_n$ belong to $H$. Then,\\(i)  $\hat{T}(g) = 0$ if and only if $g = a_0e_0 + \sum_{k = 1}^{+\infty}a_{\alpha(k,1)}f_k$ with $\sum_{k = 1}^{+\infty}\frac{a_{\alpha(k,1)}}{k^2} = 0$.\\(ii) $\hat{T}(g) = e_0$ if and only if $g = a_0e_0 + \sum_{k = 1}^{+\infty}a_{\alpha(k,1)}f_k$ with $\sum_{k = 1}^{+\infty}\frac{a_{\alpha(k,1)}}{k^2} = 1$. 
\end{Lem}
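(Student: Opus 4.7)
The plan is to carry out a direct coordinate computation, exploiting the linearity of $\hat{T}$ and its explicit action on the basis vectors $e_0$, $e_{\alpha(k,1)}$, and $e_{\alpha(k,n+1)}$. Writing $g = a_0 e_0 + \sum_{k,n \geq 1} a_{\alpha(k,n)} e_{\alpha(k,n)}$ and applying $\hat{T}$ termwise (justified by continuity), I would split the sum into the contribution from the $e_{\alpha(k,1)}$'s and from the $e_{\alpha(k,n)}$'s with $n \geq 2$, substitute the defining formulas, and reindex $n \mapsto n+1$ in the second sum to collect everything against the orthonormal basis.

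After this rearrangement, the coefficient of $e_0$ in $\hat{T}(g)$ collapses to $\sum_{k \geq 1} \frac{a_{\alpha(k,1)}}{k^2}$, and for each $k,n \geq 1$ the coefficient of $e_{\alpha(k,n)}$ becomes
\[\frac{a_{\alpha(k,n+1)}}{n+1} - \frac{a_{\alpha(k,1)}}{(n+1)(n+2)\cdots(n+k+1)}.\]
For part (i), imposing $\hat{T}(g) = 0$ is then equivalent to two families of conditions: (a) $\sum_{k \geq 1} a_{\alpha(k,1)}/k^2 = 0$ from the $e_0$-coefficient, and (b) the recurrence $a_{\alpha(k,p)} = \frac{a_{\alpha(k,1)}}{(p+1)(p+2)\cdots(p+k)}$ for all $k$ and all $p \geq 2$ from the remaining coefficients. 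Condition (b) says precisely that the coefficients of $g$ along $\{e_{\alpha(k,p)} : p \geq 2\}$ coincide with those of $a_{\alpha(k,1)} f_k$, so, together with the match at $p = 1$, we get $g = a_0 e_0 + \sum_k a_{\alpha(k,1)} f_k$. Part (ii) follows identically: the recurrence is unchanged, and the $e_0$-coefficient must equal $1$ instead of $0$, giving $\sum_{k \geq 1} a_{\alpha(k,1)}/k^2 = 1$.

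The main technicalities lie in the infinite-dimensional bookkeeping. I would verify that $f_k \in H$ (which follows from $\sum_{n \geq 1} 1/\bigl((n+1)(n+2)\cdots(n+k)\bigr)^2 \leq \sum_{n \geq 1} 1/(n+1)^2 < \infty$), that the series $\sum_k a_{\alpha(k,1)} f_k$ converges in $H$ whenever $g \in H$ (it does, because the $f_k$ lie in mutually orthogonal closed subspaces of $H$ indexed by $k$, and the sequence $(a_{\alpha(k,1)})_k$ is square-summable), and that the termwise application of $\hat{T}$ and the reindexing of the double series are legitimate in the Hilbert-space topology. All of these rest on the continuity of $\hat{T}$, already established by the norm estimate in the proof of Proposition~\ref{prop:contrehilbert}. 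Once these convergence issues are settled, both equivalences reduce to reading off coefficients from the explicit expression for $\hat{T}(g)$.
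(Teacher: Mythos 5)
Your route is the same as the paper's: the paper's proof of this lemma is literally the one line ``direct computation using the definition of $\hat{T}$'', and your termwise expansion --- the $e_0$-coefficient of $\hat{T}(g)$ being $\sum_{k\geq 1} a_{\alpha(k,1)}/k^2$, the $e_{\alpha(k,n)}$-coefficient being $\frac{a_{\alpha(k,n+1)}}{n+1}-\frac{a_{\alpha(k,1)}}{(n+1)\cdots(n+k+1)}$, hence the recurrence $a_{\alpha(k,p)}=\frac{a_{\alpha(k,1)}}{(p+1)\cdots(p+k)}$ for $p\geq 2$ --- is exactly that computation, and your convergence bookkeeping ($f_k\in H$, orthogonality of the $k$-blocks, continuity of $\hat{T}$) is fine.

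The step that does not go through as written is the asserted ``match at $p=1$''. Your recurrence constrains the coefficients of $g$ only for $p\geq 2$; at $p=1$ the coefficient of $e_{\alpha(k,1)}$ in $a_{\alpha(k,1)}f_k$ is $a_{\alpha(k,1)}\cdot\frac{1}{2\cdot 3\cdots (k+1)}=\frac{a_{\alpha(k,1)}}{(k+1)!}$, not $a_{\alpha(k,1)}$, because the paper's $f_k$ carries the weight $\frac{1}{(n+1)\cdots(n+k)}$ already at $n=1$. With the definitions as printed, the condition $\hat{T}(g)=0$ is therefore \emph{not} equivalent to $g=a_0e_0+\sum_k a_{\alpha(k,1)}f_k$; indeed one checks that $\hat{T}(f_k)=\frac{1}{k^2(k+1)!}e_0+\bigl(1-\frac{1}{(k+1)!}\bigr)\sum_{n\geq 1}\frac{1}{(n+1)\cdots(n+k+1)}e_{\alpha(k,n)}$, which is not a multiple of $e_0$. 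The lemma becomes correct, and your argument closes, precisely when $f_k$ is taken to be the vector with $e_{\alpha(k,1)}$-coefficient $1$ and coefficients $\frac{1}{(n+1)\cdots(n+k)}$ for $n\geq 2$ (that vector satisfies $\hat{T}(f_k)=\frac{1}{k^2}e_0$, which is what parts (i) and (ii) need). This normalization discrepancy appears to originate in the paper's definition of $f_k$ rather than in your computation, but a correct write-up must either flag and repair it (restate $f_k$, or state the conclusion as the coefficient recurrence itself) rather than assert a $p=1$ match that the computation does not deliver; as it stands, that sentence is the gap.
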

\begin{proof}
This is a direct computation using the definition of~$\hat{T}$. 
\end{proof}

\begin{Lem} Let~$g$ belong to~$H$ such that $\hat{T}(g) = e_0$. Write $g = a_0e_0 + \sum_{k = m}^{+\infty}\lambda_k f_k$ with $\sum_{k = m}^{+\infty}\frac{\lambda_k}{k^2} = 1$ and $\lambda_m\neq 0$. Then $g$ belongs to $Im({\hat{T}}^{m-1})$ but not to~$Im({\hat{T}}^{m})$.
\end{Lem}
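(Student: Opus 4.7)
The plan is to reduce the problem to an analysis of $\hat{T}$ restricted to the subspace $V_m = \overline{\mathrm{span}}\{e_{\alpha(m,n)} : n \ge 1\}$. First I would observe that $V_m \oplus \mathbb{C}e_0$ is $\hat{T}$-invariant, and since $\hat{T}(e_0)=0$, the $V_m$-component of $\hat{T}^p(h)$ equals $\hat{T}_m^p(h_m)$, where $\hat{T}_m:V_m\to V_m$ is the $V_m$-projection of $\hat{T}|_{V_m}$ and $h_m$ is the $V_m$-component of $h$. Using the explicit form of $f_m$ together with Lemma~\ref{lemcalculker}, a direct computation yields $\hat{T}_m(f_m)=0$, and inspection of the defining relations shows $\ker\hat{T}_m = \mathbb{C}f_m$.

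Next I would analyse the preimage equation in $V_m$. Writing $v=\sum v_n e_{\alpha(m,n)}$ and $w=\sum w_n e_{\alpha(m,n)}$, the equation $\hat{T}_m(v)=w$ is equivalent to
\[
v_{n+1} \;=\; (n+1)\,w_n \;+\; \frac{v_1}{(n+2)(n+3)\cdots(n+m+1)},\qquad n\ge 1,
\]
with $v_1$ a free parameter. Thus a decay $w_n=O(n^{-r})$ forces $v_n=O(n^{-(r-1)})$, the correction term being of order $n^{-m}$ and hence lower order. Starting from $w=f_m$ (whose coefficients are of order $n^{-m}$) and iterating $p$ times, I would obtain a $p$-fold preimage $v^{(p)}$ with coefficients of order $n^{-(m-p)}$. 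Such a $v^{(p)}$ lies in $V_m$ precisely when $m-p\ge 1$, i.e.\ for $p\le m-1$, and fails to be $\ell^2$ at the very step $p=m$.

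For the first assertion, I would apply this analysis inside each $V_k$ with $k\ge m$ to produce $\tilde h_k\in V_k$ with $\hat{T}_k^{m-1}(\tilde h_k)=\lambda_k f_k$; the sum $\sum_{k\ge m}\tilde h_k$ converges in $H$ because $\|\tilde h_k\|\le C|\lambda_k|$ uniformly in $k$, while $\sum|\lambda_k|^2<\infty$ follows from $g\in H$. Lifting to $\hat{T}^{m-1}$ introduces extra $e_0$-contributions $\gamma_k$, which I would absorb by adding a term $h_{e_0}$ with $\hat{T}^{m-1}(h_{e_0})=(a_0-\sum_k\lambda_k\gamma_k)\,e_0$; such an $h_{e_0}$ exists since $e_0\in\mathrm{Im}(\hat{T}^{m-1})$, itself a consequence of $\hat{T}(f_k)=e_0/k^2$ combined with $f_k\in\mathrm{Im}(\hat{T}^{k-1})$ for $k$ sufficiently large. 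For the second assertion, any $h\in H$ with $\hat{T}^m(h)=g$ would, by $V_m$-projection, give $h_m\in V_m$ with $\hat{T}_m^m(h_m)=\lambda_m f_m$; back-tracing through the preimage chain then forces the leading coefficients of $h_m$ to be of order $n^0$ (a nonzero constant, since $\lambda_m\ne 0$), contradicting $h_m\in\ell^2$. The hard part will be to verify rigorously that the additive freedom from $\ker\hat{T}_m$ at each step of the back-tracing --- which only injects terms of orders $n^{-m},n^{-(m-1)},\dots$ --- cannot cancel the $n^0$ leading obstruction that appears at step $p=m$.
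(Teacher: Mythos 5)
Your plan is sound, and at bottom it runs on the same mechanism as the paper's proof: both exploit the fact that $\hat{T}$ preserves each ``column'' $\mathbb{C}e_0\oplus V_k$ with $V_k=\overline{\mathrm{span}}\{e_{\alpha(k,n)}\mid n\geq 1\}$, and that inside the $m$-th column the kernel direction $f_m$ admits exactly $m-1$ successive preimages before a non-$\ell^2$ obstruction appears. The paper packages this with the explicit chain elements $f_{k,j}$ (so that $\hat{T}(f_{k,j})=f_{k,j-1}$, $f_{k,1}=f_k$), shows by an exact coefficient computation that $f_{k,k}\notin \mathrm{Im}(\hat{T})$, and then records which $\hat{T}^m$-preimages of $e_0$ lie in $\mathrm{Im}(\hat{T})$; you package it through the compressions $\hat{T}_m$ and the one-step recursion $v_{n+1}=(n+1)w_n+\frac{v_1}{(n+2)\cdots(n+m+1)}$, which you have computed correctly, together with order-of-decay bookkeeping. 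Two remarks on the details you leave open. For the positive half, you can suppress the $e_0$-contributions $\gamma_k$ entirely instead of absorbing them at the end: for $v\in V_k$ the $e_0$-component of $\hat{T}(v)$ is $v_1/k^2$, so choosing the free parameter $v_1=0$ at every step of the back-substitution gives genuine $\hat{T}$-preimages inside $V_k$ (these are, up to normalization, precisely the paper's $f_{k,m}$, which vanish on the first $m-1$ slots of the column); with that choice $\gamma_k=0$, the uniform bound $\|\tilde h_k\|\leq C|\lambda_k|$ for $k\geq m$ is immediate, only the single equation $\hat{T}^{m-1}(h_{e_0})=a_0e_0$ remains (solved in column $m-1$, say), and the apparent circularity in deducing $e_0\in\mathrm{Im}(\hat{T}^{m-1})$ from $f_k\in\mathrm{Im}(\hat{T}^{k-1})$ disappears.

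The step you defer as ``the hard part'' is exactly the paper's key point (its claim that a preimage of $e_0$ under $\hat{T}^m$ lies in $\mathrm{Im}(\hat{T})$ only if the top coefficients vanish, equivalently $f_{m,m}\notin\mathrm{Im}(\hat{T})$), but it closes directly from your own recursion, so there is no real gap. If $\hat{T}_m^m(h_m)=\lambda_m f_m$, set $u^{(p)}=\hat{T}_m^{m-p}(h_m)$ and $c_p=u^{(p)}_1$, so that $u^{(0)}=\lambda_m f_m$, $u^{(m)}=h_m$ and each $u^{(p)}$ satisfies your recursion with injection $c_p$. The kernel freedom injected at step $j$ has the same shape $\frac{c_j}{(n+1)\cdots(n+m)}$ as $f_m$ itself and then simply re-enters the recursion; by linearity and an induction on $p$ one gets, for all sufficiently large $n$,
\begin{equation*}
u^{(p)}_{n}\;=\;\frac{\lambda_m}{(n+1)\cdots(n+m-p)}\;+\;\sum_{j=1}^{p}\frac{c_j}{(n+1)\cdots(n+m-p+j)},
\end{equation*}
the leading coefficient remaining exactly $\lambda_m$ because multiplication by $(n+1)$ telescopes one factor of the product at each step. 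At $p=m$ the first product is empty, so $u^{(m)}_n=\lambda_m+O(1/n)\to\lambda_m\neq 0$: no choice of the scalars $c_1,\dots,c_m$ can cancel the constant term, since their contributions are $O(n^{-j})$ with $j\geq 1$. This contradicts $h_m\in\ell^2$ and finishes the negative half. With this induction and the $v_1=0$ device written out, your argument is a correct, slightly more analytic rendering of the paper's proof.
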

\begin{proof} For $k\geq j\geq 1$, let~$f_{k,j} = \sum a_ne_n$ be such that~$a_0 = 0$, $a_{\alpha(\ell,n)} = 0$ for~$\ell\neq k$ or $1\leq n\leq j-1$, and $a_{\alpha(k,n)} = \frac{1}{(n+1)\cdots(n+k-j+1)}$ when~$n\geq j$. One has~$f_{k,1} = f_k$ for every~$k$. By definition of the map~$\hat{T}$, we have~$\hat{T}(f_{k,j}) = f_{k,j-1}$ for $2\leq j\leq k$. Furthermore,~$f_{k,k}$ does not belong to $Im(\hat{T})$: Assume it is the case and that $\hat{T}(\sum  a_ne_n) = f_{k,k}$. As in Lemma~\ref{lemcalculker}, a computation proves that $a_{\alpha(k,n)} = 1+\frac{a_{\alpha(k,1)}}{(n+1)\cdots (n+k)}$ for every~$n\geq k+1$. This is impossible, since $\sum a_ne_n$ lies in~$H$. Let $m$ be a positive integer, and assume $h$ lies in $H$ with~$\hat{T}^m(h) = e_0$. We claim that\\ \begin{tabular}{ll}(a)&$h = a_0 e_0 + \sum_{r = 1}^m \sum_{k = r}^{+\infty}\lambda_{k,r} f_{k,r}$ with $\lambda_{k,r}\in\mathbb{C}$ with $\sum_{k = 1}^{+\infty}\frac{\lambda_{k,m}}{k^2} = 1$ and\\&$\sum_{k = 1}^{+\infty}\frac{\lambda_{k,r}}{k^2} = 0$ for~$1\leq r < m$.\\(b)&$h$ belongs to~$Im(\hat{T})$ if and only if $\lambda_{r,r} = 0$ for every $r$ in $\{1,\cdots, m\}$.\end{tabular}\\ For $r$ in $\{1,\cdots,m\}$, this implies~$\hat{T}^r(h) =  a_r e_0 + \sum_{j = 1}^{m-r} \sum_{k = j}^{+\infty}\lambda_{k,j+r} f_{k,j}$ where $a_r = \sum_{k = 1}^{+\infty}\frac{a_{\lambda_{k,r}}}{k^2}$. Indeed, for $m = 1$, the Part~(a) of the claim is equivalent to Lemma~\ref{lemcalculker}(ii); Part~(b) is a consequence of the above computation: $e_0$ and all the $f_{k,1}$ belong to $Im(\hat{T})$ except $f_{1,1}$. We deduce the claim for $m\geq 2$ by an easy induction: Part~(a) follows from equalities~$\hat{T}(f_{k,j}) = f_{k,j-1}$ for $j\leq k$ and Lemma~\ref{lemcalculker}(i), Part~(b) follows from similar arguments to the case $m = 1$. Now, let $g$ be as in the lemma, and set $f_{0,1} = e_0$. The element~$g$ belongs to $Im({\hat{T}}^{m-1})$ because~$\hat{T}^{m-1}(m^2a_0f_{m-1,m} + \sum_{k = m}^{+\infty}\lambda_k f_{k,m}) = g$. Moreover, if $\hat{T}^{m-1}(h) = g$, then $h = a e_0 + \sum_{j = 1}^m \sum_{k = j}^{+\infty}\lambda_{k,j} f_{k,j}$ with $\lambda_{m,m} = \lambda_m\neq 0$, then~$h$ does not belong to~$Im({\hat{T}})$.\end{proof}
\section{Unit open disc and analytic function}
\label{secteanalyt}
As explained at the end of Section~\ref{section:example}, the stable set is deeply connected with the inverse limit. The latter has been investigated in the case of an analytic self-map over the unit open disc~\cite{PoC2,PoC3}. Here we focus on this particular case.\begin{Prop}\cite{PoC1}
Consider an analytic function~$\varphi$ on the open unit disc~$\mathbb{D}$ of the complex plane such that $|\varphi(z)|<1$. Then, the set~$\Atrac(\varphi)$ and~$\fvie{\varphi}$ may be distinct.\end{Prop}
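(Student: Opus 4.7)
The plan is to construct an explicit analytic self-map $\varphi:\mathbb{D}\to\mathbb{D}$ together with a point $z_\ast\in\Atrac(\varphi)\setminus\fvie{\varphi}$. By Lemma~\ref{lemfac2}, such a $\varphi$ must be non-surjective and not injective on any iterated image $\varphi^n(\mathbb{D})$. The picture to keep in mind is a holomorphic analogue of Example~\ref{lem:contreexemple}: a point $z_\ast$ lying in every $\varphi^n(\mathbb{D})$ and admitting arbitrarily long but only finite backward chains.

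Since $|\varphi|<1$ strictly and $\varphi$ is not an automorphism, the Denjoy--Wolff theorem provides a point $w_\varphi\in\overline{\mathbb{D}}$ to which $\varphi^n$ converges locally uniformly, giving a first handle on the nested sets $\varphi^n(\mathbb{D})$. Following~\cite{PoC1}, the strategy is to take $\varphi$ to be (a rescaling of) an inner function whose zeros $(a_n)_{n\geq 1}$ satisfy two conditions: firstly, $a_n\in\varphi^n(\mathbb{D})\setminus\varphi^{n+1}(\mathbb{D})$, so that no backward orbit passing through $a_n$ can exceed length $n$; secondly, the $a_n$ accumulate at a designated point $z_\ast$, which forces $z_\ast\in\bigcap_n\varphi^n(\mathbb{D})=\Atrac(\varphi)$.

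Once such a $\varphi$ is produced, the verification that $z_\ast\notin\fvie{\varphi}$ proceeds by contradiction: any infinite backward orbit $(x_n)_{n\geq 0}$ with $x_0=z_\ast$ must pick $x_1\in\varphi^{-1}(z_\ast)$, and combinatorial control over the preimage tree of $z_\ast$, through the Blaschke/argument-principle structure, forces $x_1$ into a branch of finite depth, contradicting $x_n\in\varphi^n(\mathbb{D})$ for all $n$.

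The main obstacle is the construction itself: one must arrange an analytic $\varphi$ on $\mathbb{D}$ whose zero distribution satisfies the Blaschke summability condition $\sum(1-|a_n|)<\infty$, whose image stays strictly inside $\mathbb{D}$, and whose iterated images realize the prescribed depth labeling of the $a_n$ without accidentally extending a branch that was meant to terminate. The explicit construction in \cite{PoC1} carries this out, and once it is in hand the preceding verification yields $z_\ast\in\Atrac(\varphi)\setminus\fvie{\varphi}$ as required.
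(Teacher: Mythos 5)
There is a genuine gap: your argument never actually produces the map $\varphi$. The entire content of this proposition is the construction of a concrete example, and your proposal replaces it by a list of desiderata (zeros $a_n$ with $a_n\in\varphi^n(\mathbb{D})\setminus\varphi^{n+1}(\mathbb{D})$, accumulating at a point $z_\ast$) followed by the statement that ``the explicit construction in \cite{PoC1} carries this out.'' But \cite{PoC1} is a personal communication whose content is precisely what has to be supplied here, so deferring to it is circular; nothing in your sketch shows that an analytic self-map of $\mathbb{D}$ with the required zero/depth structure exists. Moreover, the two bridging claims you do make are unjustified. First, the accumulation of the points $a_n$ at $z_\ast$ only places $z_\ast$ in the closure of each open set $\varphi^n(\mathbb{D})$, not in $\varphi^n(\mathbb{D})$ itself, so it does not ``force'' $z_\ast\in\Atrac(\varphi)$; membership of $z_\ast$ in every iterated image has to be built into the example, not deduced from a limit of zeros. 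Second, the exclusion $z_\ast\notin\fvie{\varphi}$ requires showing that \emph{every} backward chain starting at $z_\ast$ terminates, and ``combinatorial control over the preimage tree \ldots through the Blaschke/argument-principle structure'' is a placeholder, not an argument: placing zeros controls the preimages of $0$, not the preimages of $z_\ast$ or of the points along its backward orbits, which is where the difficulty lies.

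For comparison, the paper's proof (due to Poggi-Corradini) obtains the example by transplanting Example~\ref{lem:contreexemple} holomorphically rather than by prescribing zeros: one takes the right half-plane with the points $2^n$ removed, the map $z\mapsto z/2$, lifts it to the universal cover, and then cuts vertical slits on exactly those sheets containing the preimages that would allow infinite backward orbits through the distinguished point over $3/4$. The Uniformization Theorem \cite[Chap.~IV]{Rey} identifies the resulting surface with $\mathbb{D}$, and the covering-space description of preimages (homotopy classes of paths) is what gives the precise control you were hoping to extract from a Blaschke-type structure: the marked point lies in every forward image by construction, while all of its backward chains are cut off by the slits. If you want to salvage your route, you would need to exhibit the inner-function example explicitly and prove both membership in $\Atrac(\varphi)$ and termination of all backward orbits; as written, the proposal identifies the right target but proves neither.
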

\begin{proof} The following example has been provided to us by P. Poggi-Corradini. It is closed to Example~\ref{lem:contreexemple}.
Let $H = \{z\in\mathbb{C}\mid Re(z)>0\} \setminus \{2^n\mid n\in\mathbb{N}\}$, and let~$f :H\to H$ be the multiplication by
$\frac{1}{2}$.  Let~$\tilde{S}$ be the universal covering of $H$, and choose $\tilde{x}_0\in \tilde{S}$ that projects down to~$x_0 = \frac{3}{4}$. The self-map~$f$ lifts to an analytic self-map~$\tilde{\varphi}$ of $\tilde{S}$. The pre-image of~$\tilde{x}_0$ under~$\tilde{\varphi}$ is in one-to-one correspondence with all the equivalent classes of paths in $H$ that start at $x_0$ and end at $x_1 = 3/2$. Label the pre-images by~$(x_{1,j})$. Now for
every $j$ consider the set of pre-images under $(1/2)^j$ and call them $(a_{k,j})$.
These are points that project from $\tilde{S}$ down to $3\cdot 2^{j-1}$. Draw a slit of the form
$\{(3*2^{j-1},y)\mid  y\geq 0\}$ on each sheet of $\tilde{S}$ containing one of the $a_{k,j}'$s. Draw a similar vertical half-slit from every pre image of the $a_{k,j}$'s. Call the obtained surface $S$. The map~$\tilde{\varphi}$ restrict to a self-map of $S$, and by the \emph{Uniformization Theorem} \cite[Chap.~IV]{Rey},  the surface $S$ is conformally equivalent to~$\mathbb{D}$. Therefore, the restriction of~$\tilde{\varphi}$ gives
rise to an analytic self-map~$\varphi$ of $\mathbb{D}$. Now, by construction, the point of~$\mathbb{D}$ corresponding to~$\tilde{x}_0$ belongs to~$\Atrac(\varphi)$, but not to~$\fvie{\varphi}$. 
\end{proof}
\section{Compact metric spaces and continuous functions}
Inverse limit of continuous function on a compact metric space has also been considered~\cite{BaRo}. Here we focus on this particular case. \begin{Prop} \label{prop:compact}Let $X$ be a compact metric space, and consider a continuous function~$\varphi:X\to X$. Then~$\Atrac(\varphi)$ is a non-empty compact subset and $$\fvie{\varphi} = \Atrac(\varphi).$$
\end{Prop}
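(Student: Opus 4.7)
The plan is to first dispatch the topological statement and then build the required inverse sequence by a standard compactness argument in a product space.

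For the first part, I would observe that each iterate $\varphi^n(X)$ is compact as the continuous image of the compact space $X$, and that the sequence is nested, $\varphi^{n+1}(X)\subseteq \varphi^n(X)$. Hence $\Atrac(\varphi)=\bigcap_n\varphi^n(X)$ is a nested intersection of nonempty compact subsets of the Hausdorff space $X$, therefore itself a nonempty compact set.

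For the equality, the inclusion $\fvie{\varphi}\subseteq \Atrac(\varphi)$ is already in sequence~(\ref{suiteinclusions}), so the work is in the reverse direction. Fix $x\in \Atrac(\varphi)$; I need to produce a sequence $(x_n)_{n\geq 0}$ with $x_0=x$ and $\varphi(x_{n+1})=x_n$ for all $n$. The natural framework is the product space $X^{\mathbb{N}}$ equipped with the product topology, which is compact by Tychonoff. For each $N\geq 0$, let
$$C_N=\{(y_n)_{n\geq 0}\in X^{\mathbb{N}}\mid y_0=x\text{ and }\varphi(y_{n+1})=y_n\text{ for }0\leq n<N\}.$$
Each $C_N$ is closed in $X^{\mathbb{N}}$: it is cut out by the closed condition $y_0=x$ together with finitely many closed conditions of the form $\varphi(y_{n+1})-y_n=0$, all of which use only continuity of $\varphi$ and the Hausdorff property. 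Because $x\in\varphi^N(X)$, there exists $z\in X$ with $\varphi^N(z)=x$; setting $y_n=\varphi^{N-n}(z)$ for $0\leq n\leq N$ and $y_n$ arbitrary for $n>N$ shows $C_N\neq\emptyset$. Moreover $C_{N+1}\subseteq C_N$.

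The finite intersection property in the compact space $X^{\mathbb{N}}$ then gives $\bigcap_N C_N\neq\emptyset$, and any element of this intersection is exactly a sequence $(x_n)$ of the required form, witnessing $x\in\fvie{\varphi}$. The only subtle point is ensuring the closedness of $C_N$ and the nonemptiness of every finite-depth approximation — both of which reduce to continuity of $\varphi$ and the definition of $\Atrac(\varphi)$, so I do not expect any serious obstacle. (Equivalently one could invoke the classical fact that the inverse limit of the constant projective system $(X,\varphi)$ is a nonempty compact space projecting onto $\Atrac(\varphi)$, via the description of $\fvie{\varphi}$ as $\pi_0$ of the inverse limit given at the end of Section~\ref{section:example}.)
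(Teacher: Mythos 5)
Your proof is correct, but it takes a different route from the paper's. The paper argues sequentially: for $x\in\Atrac(\varphi)$ it picks, for each $n$, a finite backward orbit $x_{n,0},\dots,x_{n,n}$ with $x_{n,0}=x$, and then uses sequential compactness of the metric space $X$ to extract convergent subsequences of the columns $(x_{n,m})_n$, obtaining limits $y_m$ with $y_0=x$ and $\varphi(y_{m+1})=y_m$ by continuity (strictly speaking this needs a diagonal extraction so that the same subsequence works for all $m$ simultaneously, a point the paper leaves implicit). You instead work in $X^{\mathbb{N}}$ with the product topology: the sets $C_N$ of depth-$N$ backward orbits starting at $x$ are closed, nonempty (precisely because $x\in\varphi^N(X)$), and nested, so Tychonoff plus the finite intersection property yields a full backward orbit — this is exactly the standard nonemptiness argument for inverse limits of compact spaces, as you note in your parenthetical. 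Your version buys more: it avoids the subsequence-compatibility subtlety entirely and works verbatim for any compact Hausdorff space, with no metric needed; the paper's argument is more elementary and stays within sequences, which is natural in the metric setting. One cosmetic point: the condition "$\varphi(y_{n+1})-y_n=0$" has no meaning in a bare metric space; say instead that $C_N$ is the preimage of the (closed, by Hausdorffness) diagonal of $X\times X$ under the continuous map $(y_k)_k\mapsto(\varphi(y_{n+1}),y_n)$, intersected over $0\leq n<N$, together with the closed condition $y_0=x$. With that rephrasing the closedness of $C_N$ is immediate and the argument is complete.
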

Clearly, $\Orb(\varphi)$ is not equal to $\fvie{\varphi}$ in general (consider $\varphi: [0,1]\to [0,1]$ defined by $\varphi(x) = \frac{1}{2}$ for $x\in[0,\frac{1}{2}]$ and $\varphi(x) = \frac{3}{2}(x-\frac{1}{2})$ otherwise). 
\begin{proof} 
The set~$\Atrac(\varphi)$ is a non-empty compact subset as an intersection of nested compact subsets of $X$. Assume $x$ belongs to $\Atrac(\varphi)$, and for each~$n$ consider a finite sequence~$x_{n,0},\cdots, x_{n,n}$ such that $x_{n,0} = x$ and $\varphi(x_{n,m}) = x_{n,m-1}$ for~$m\in \{1,\cdots,n\}$. Since $X$ is compact, each infinite sequence~$(x_{n,m})_{n}$ possesses a sub-sequence that converge to an element $y_m$ of $X$. Clearly we have $y_0 = x$ and $\varphi(y_{m+1}) = y_m$ for every $m$. Thus $x$ belongs to~$\fvie{\varphi}$.\end{proof}
\label{sectcompa}
\section{Free groups and homomorphims}
We recall that every subgroup of a free group is a free group~\cite[Cor.~2.9]{MaKaSo} and  that finite rank free groups are hopfian, that is every surjective self-homomorphism is indeed an automorphism~\cite[Th.~2.13]{MaKaSo}. 
\begin{Prop} Let $X = F_m$ be the free group of rank~$m$, and consider a group endomorphism~$\varphi: F_m\to F_m$. Then, $$\fvie{\varphi} = \Atrac(\varphi).$$ \label{prop:gplibre}
\end{Prop}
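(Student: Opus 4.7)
The plan is to exhibit an index $N$ such that $\varphi$ restricts to an injective map on $\varphi^N(F_m)$, and then to invoke Lemma~\ref{lemfac2}.

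First I would control the ranks of the descending images. Each $\varphi^n(F_m)$ is a subgroup of $F_m$, hence free by Nielsen--Schreier; and it is generated by the $m$ images of a generating set of $F_m$, so its rank $r_n$ satisfies $r_n\leq m$. Since $\varphi^{n+1}(F_m)$ is the image of $\varphi^n(F_m)$ under $\varphi$, it is generated by $r_n$ elements, yielding $r_{n+1}\leq r_n$. The sequence $(r_n)$ is therefore a non-increasing sequence in $\{0,1,\dots,m\}$ and becomes constant equal to some $r$ from an index $N$ on.

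Second I would upgrade rank stability to kernel stability using the Hopfian property. Writing $K_n=\ker(\varphi^n)$, the natural quotient map $F_m/K_n \twoheadrightarrow F_m/K_{n+1}$, $xK_n\mapsto xK_{n+1}$, is surjective, and via the isomorphisms $F_m/K_n\cong \varphi^n(F_m)$ and $F_m/K_{n+1}\cong \varphi^{n+1}(F_m)$ induced by $\varphi^n$ and $\varphi^{n+1}$, both sides are free of rank $r$ for $n\geq N$. After identifying them with a fixed copy of $F_r$, we obtain a surjective endomorphism of $F_r$, which by the Hopfian property of finite rank free groups is an automorphism. Hence $K_n=K_{n+1}$ for all $n\geq N$.

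Finally I would deduce injectivity and conclude: if $y=\varphi^N(x)\in \varphi^N(F_m)$ satisfies $\varphi(y)=1$, then $x\in K_{N+1}=K_N$, so $y=\varphi^N(x)=1$. Thus $\varphi$ is injective on $\varphi^N(F_m)$, and Lemma~\ref{lemfac2} gives $\fvie{\varphi}=\Atrac(\varphi)$. The main obstacle is the second step, where rank stability must be converted into kernel stability via the Hopfian property; this is also where finite rank is essential, both to bound the ranks in the first step and to invoke hopficity in the second. The rank argument and the final deduction are otherwise routine.
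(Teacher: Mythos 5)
Your proof is correct and follows essentially the same route as the paper: stabilization of the ranks of $\varphi^n(F_m)$, Hopficity of finite rank free groups to get injectivity of $\varphi$ on a sufficiently deep image, and then Lemma~\ref{lemfac2}. Your detour through kernel stabilization $K_N=K_{N+1}$ is just a reformulation of the paper's direct application of Hopficity to the surjective restriction $\varphi:\varphi^n(F_m)\to\varphi^{n+1}(F_m)$, since that restriction is conjugate to your quotient map $F_m/K_n\to F_m/K_{n+1}$.
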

It is easy to see  that the set $\Orb(\varphi)$ is not equal to $\fvie{\varphi}$ in general (just consider some inner automorphism of $F_n$). 
\begin{proof}The main argument is like in~\cite[Theorem.~1]{ImrTur}: the sequence~$rk(\varphi^n(F_m))$ eventually stabilizes, and the restrictions $\varphi: \varphi^n(F_m)\to \varphi^{n+1}(F_{m})$ is surjective. Since free groups are hopfian, it follows that the restriction of $\varphi$ to~$\varphi^n(F_m)$ is injective for sufficiently large~$n$. Then, we apply Lemma~\ref{lemfac2}.
\end{proof}
The above result can be extended without difficulty to \emph{limit groups of free groups}. Let us briefly recall how they are defined~\cite{Gui}. Fix a positive integer~$n$. A \emph{marked group} is a pair $(G,S)$ where~$G$ is a group and $S: F_n\to G$ is an homomorphism  that is onto. The \emph{space of marked groups}~$\mathcal{G}_n$ is the set of marked groups where two marked groups~$(G,S)$ and $(G',S')$ are identified when there exists a group isomorphism~$\varphi : G\to G'$ such that $S' = \varphi\circ S$. The set~$\mathcal{G}_n$ can be identified with a subset of the set~$\{0,1\}^{F_n}$ of subsets of $F_n$ (an equivalence class of pairs~$(G,S)$ is identified with the kernel of $S$). The product topology on~$\{0,1\}^{F_n}$ makes~$\mathcal{G}_n$ a compact space. A \emph{limit group of free groups} is a group $G$ such there exists a pair $(G,S)$ in the closure of the free groups in~$\mathcal{G}_n$ (that is in the closure of the set of pairs~$(F_m,S)$).
\begin{Prop} Let $X$ be a limit group of free groups and  consider a group endomorphism~$\varphi: X\to X$. Then, $$\fvie{\varphi} = \Atrac(\varphi).$$  \end{Prop}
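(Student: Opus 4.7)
The plan is to transfer the argument of Proposition~\ref{prop:gplibre} to limit groups, replacing the rank invariant for free groups with an appropriate finiteness invariant for limit groups and invoking the analogue of the Hopf property. Two structural facts about limit groups of free groups are needed: (a) every finitely generated subgroup of a limit group of free groups is itself a limit group of free groups; (b) limit groups of free groups are Hopfian (a theorem of Sela, also obtained by Kharlampovich--Miasnikov).

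I would first observe that $X$ is finitely generated by the definition of a marked group, so every $\varphi^n(X)$ is a finitely generated subgroup of $X$ and hence a limit group by~(a). The restriction $\varphi\colon\varphi^n(X)\to\varphi^{n+1}(X)$ is surjective by construction.

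The core step is to argue that the descending chain of surjections $\varphi^n(X)\twoheadrightarrow\varphi^{n+1}(X)$ eventually consists of isomorphisms. For this one needs a well-founded complexity defined on isomorphism classes of limit groups which is non-increasing under surjections between limit groups and strictly decreasing unless the surjection is an isomorphism; such an invariant is provided by the Makanin--Razborov analysis (equivalently, by the Kharlampovich--Miasnikov tower decomposition). Once $\varphi^{n_0}(X)\simeq\varphi^{n_0+1}(X)$, the Hopf property~(b) promotes the surjection $\varphi\colon\varphi^{n_0}(X)\to\varphi^{n_0+1}(X)$ to an isomorphism. In particular $\varphi$ is injective on $\varphi^{n_0}(X)$, and Lemma~\ref{lemfac2} yields $\fvie{\varphi}=\Atrac(\varphi)$.

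The main obstacle is the stabilization step. In the free case, rank is bounded above by the rank of $F_m$ and immediately gives a monotone integer invariant; in the limit-group setting monotonicity under epimorphisms requires the deep structure theory of limit groups. Once that point is granted, the remainder of the proof is a verbatim transcription of the argument of Proposition~\ref{prop:gplibre}, which explains the author's remark that the extension proceeds ``without difficulty''.
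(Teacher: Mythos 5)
Your argument is correct and is essentially the paper's own: the author likewise transfers the proof of Proposition~\ref{prop:gplibre}, replacing the rank-stabilization step by the fact that a chain of epimorphisms between limit groups eventually consists of isomorphisms, citing \cite[Prop.~3.13]{Gui} and \cite[Prop.~5.1]{Sel} for exactly the stabilization/Hopf-type input you derive from the Makanin--Razborov structure theory, and then concluding via Lemma~\ref{lemfac2}.
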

\begin{proof}
The main argument is like in Proposition~\ref{prop:gplibre} using~\cite[Prop.~3.13]{Gui} (see also~\cite[Prop.~5.1]{Sel}) to conclude.
\end{proof}
\label{sectfree}
\section{Finite words, infinite words and substitutions}
\label{sectsubsti} Let $S$ be a finite set.  We denote by $S^*$ and $S^\omega$ the free monoid of finite words and the set of right infinite words, respectively, on the alphabet~$S$. We denote by~$\varepsilon$ the trivial word. A \emph{substitution} is a map from $S$ to~$S^*$. It is clear that a substitution~$\varphi: S\to S^*$ extends to an endomorphism of monoid~$\varphi_*: S^*\to S^*$ defined by $\varphi_*(s_1\cdots s_n) = \varphi(s_1)\cdots\varphi(s_n)$. We say that a letter $s$ in $S$ is \emph{mortal} (\emph{for the substitution}) if there exists a positive integer~$n$ such that $\varphi_*^n(s)$ is the empty word. The set of mortal letters is denoted by~$M_\varphi$. When $s$ is not mortal, we say that it is \emph{immortal}. The map~$\varphi$ is said to be \emph{non-erasing} if the set~$M_\varphi$ is empty. When~$\varphi$ is non-erasing, it can also be extended to a map~$\varphi_\omega: S^\omega\to S^\omega$ defined by $\varphi_\omega(s_1s_2\cdots) = \varphi(s_1)\varphi(s_2)\cdots$. A special case occurs when~$S = \{s_1,s_2\}$ and~$\varphi$ extends to an automorphism of~$F_2$. The elements of~$\fvie{\varphi_\omega}$ are special kinds of the so-called~\emph{sturmian words}~\cite{GlLeRi}. In the erasing case, we can still extend~$\varphi$ to a map a map from~$S^\omega$ to $S^\omega\cup S^*$ that restricts to a self-map~$\varphi_\omega: S_\infty^\omega\to S_\infty^\omega$, where~$S_\infty^\omega$ is the set of infinite words in~$S^\omega$ that contains an infinite number of immortal letters. When no confusion is possible, we will write~$\varphi$ for~$\varphi_*$ and $\varphi_\omega$. 

\begin{Prop} \label{prop:finworderasornot}Let $S$ be a finite set and  $\varphi : S\to S^*$ be a substitution. Then,\\ (i) $\Orb(\varphi_*) = \fvie{\varphi_*} = \Atrac(\varphi_*)$;\\
(ii) $\Orb(\varphi_\omega) = \fvie{\varphi_\omega} = \Atrac(\varphi_\omega)$;\\
(iii) When $\varphi$ is non-erasing, then $\Orb(\varphi_*)$ is the free monoid~$\{s\in S\mid \varphi(s)\in S\}^*$. 
\end{Prop}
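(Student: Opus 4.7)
The plan for (i) is a straightforward pigeonhole. Let $w\in\Atrac(\varphi_*)$ and for each $n$ pick a preimage $u_n\in S^*$ with $\varphi_*^n(u_n)=w$. Denoting by $N$ the greatest mortality of the mortal letters, for $n\ge N$ every mortal letter in $u_n$ is killed by $\varphi^n$, so we may delete those and assume $u_n$ consists only of immortal letters. Each immortal letter contributes at least one symbol under $\varphi^n$, so $|u_n|\le|w|$ and the sequence $(u_n)_{n\ge N}$ lives in a finite set. Pigeonhole yields $n_1<n_2$ with $u_{n_1}=u_{n_2}=:u$, and then $\varphi^{n_2-n_1}(w)=\varphi^{n_2-n_1}\varphi^{n_1}(u)=\varphi^{n_2}(u)=w$ places $w$ in $\Fix(\varphi^{n_2-n_1})\subseteq\Orb(\varphi_*)$; combined with the trivial chain $\Orb\subseteq\Stab\subseteq\Atrac$ this settles (i).

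For (ii), preimages of an infinite word are themselves infinite, hence live in the uncountable set $S_\infty^\omega$ and cannot be pigeonholed directly; my plan is a prefix-by-prefix reduction to (i). First pass to the immortal sub-substitution $\varphi''$ on $S_{\mathrm{imm}}$ (well-defined and non-erasing, since if an immortal letter had a purely mortal image then it would itself be mortal). For each $k\ge1$ define $f_k:S_{\mathrm{imm}}^k\to S_{\mathrm{imm}}^k$ by $f_k(v)=\varphi''(v)[0..k-1]$; the non-erasing property gives $|\varphi''(v)|\ge k$, and an easy induction shows $f_k^n(v)=\varphi''^n(v)[0..k-1]$. For any $w\in\Atrac(\varphi_\omega)$ and preimages $u_n$ reduced to immortal letters as in (i), the identity $\varphi^n(u_n)=w$ restricts to $f_k^n(u_n[0..k-1])=w[0..k-1]$, hence $w[0..k-1]\in\bigcap_n f_k^n(S_{\mathrm{imm}}^k)=\Atrac(f_k)$; applying (i) to the finite self-map $f_k$ produces a period $p_k$ with $\varphi^{p_k}(w)[0..k-1]=w[0..k-1]$.

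The main obstacle is then to extract from the family $(p_k)_k$ a single period for the whole word. I would do this by pigeonholing the first letter $u_n[0]$ of the preimages: there is an immortal letter $s_0$ and an infinite $B_0\subseteq\mathbb{N}$ with $u_n[0]=s_0$ for $n\in B_0$. Two cases arise. If $|\varphi^n(s_0)|$ is unbounded on $B_0$, then the $\varphi^n(s_0)$'s are prefixes of $w$ of growing length so $w=\lim_{n\in B_0}\varphi^n(s_0)$, and refining $B_0$ using the eventual periodicity of each $f_k$ together with continuity of $\varphi_\omega$ yields a single $p$ with $\varphi^p(w)=w$. If $|\varphi^n(s_0)|$ stays bounded, a further pigeonhole selects an infinite $B_0'\subseteq B_0$ on which $\varphi^n(s_0)=v_0$ is a fixed finite word; then $w$ begins with the $\varphi$-periodic block $v_0$, whose period is bounded by the finite word set in which $v_0$ lives, the suffix $w[|v_0|..]$ is again in $\Atrac(\varphi_\omega)$, and the argument iterates on its first letter. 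Since bounded first letters belong to the finite sub-alphabet of letters with bounded iterates, the recursion either eventually triggers the unbounded case or uses only finitely many periodic blocks whose least common multiple $p$ satisfies $\varphi^p(w)=w$; combined with $\Orb\subseteq\Stab\subseteq\Atrac$ this gives (ii).

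For (iii) the plan is a direct letter-level computation. Non-erasure gives $|\varphi^n(w)|=\sum_{s\text{ in }w}|\varphi^n(s)|\ge|w|$ with equality if and only if $|\varphi^n(s)|=1$ for every letter $s$ of $w$; when this happens, freeness of $S^*$ forces $\varphi^n(s)=s$ letter by letter. Hence $\Fix(\varphi_*^n)$ is the free monoid on letters fixed by $\varphi^n$, and $\Orb(\varphi_*)=\bigcup_n\Fix(\varphi_*^n)$ is the free monoid on letters periodic under $\varphi$. A standard finite-dynamics argument, applying (i) to the induced letter-level self-map of $\varphi$ restricted to the sub-alphabet $\{s\in S\mid\varphi(s)\in S\}$, identifies this set of periodic letters with $\{s\in S\mid\varphi(s)\in S\}$, yielding the claimed equality.
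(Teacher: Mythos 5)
Your part (i) is correct, and in fact it is cleaner than the paper's treatment: the paper proves (i) directly only in the non-erasing case and recovers the general case from (ii) by appending an extra fixed letter, whereas your pigeonhole on mortal-free preimages handles erasing substitutions at once. The genuine gaps are in (ii). First, the identity $f_k^n(u_n[0..k-1])=w[0..k-1]$ is false in general: $f_k$ is built from the immortal sub-substitution $\varphi''$, so $f_k^n(u_n[0..k-1])$ is the length-$k$ prefix of the word obtained from $w$ by erasing its mortal letters, not of $w$ itself, and an element of $\Atrac(\varphi_\omega)$ may contain mortal letters (take $\varphi(a)=ama$, $\varphi(m)=\varepsilon$: the word $amaama\cdots$ lies in $\Atrac(\varphi_\omega)$). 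Your prefix argument therefore controls at best the immortal skeleton of $w$, and the passage back to $w$ is missing; the paper avoids this by tracking the first \emph{immortal} letter inside the actual words $W_{n,i}$ and by keeping the mortal factors $\varphi_*^{q-1}(V_1)\cdots V_1$ inside the fixed word of Lemma~\ref{Lem:lemptfixe}. Second, and more seriously, the decisive step is asserted rather than proved: the finite dynamics of each $f_k$ only yields, for each $k$, \emph{some} period $p_k$ of the length-$k$ prefix, and what is needed is a uniform bound on the minimal periods; neither ``refining $B_0$'' nor continuity of $\varphi_\omega$ produces that. Nor does the dichotomy on $|\varphi^n(s_0)|$ work as stated: even when these lengths are unbounded, the growth may come from letters occurring after the eventual first immortal letter $a$ of the iterates, so one still needs the return structure $\varphi^r(a)=V_1aV_2$ with $\ell_\infty(V_1)=0$ and $r$ bounded by the number of immortal letters, the dichotomy on $\ell_\infty(V_2)$ of Lemma~\ref{Lem:lemptfixe}, and, in the bounded sub-case, the same stripping-and-recursing as in your Case B --- this is precisely the machinery of the paper's proof. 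Finally, in Case B the phrase ``uses only finitely many periodic blocks'' cannot be taken literally: if the unbounded case is never triggered the recursion produces infinitely many blocks, and what saves the argument is that all blocks lie in one fixed finite set of bounded-length words, so their minimal periods admit a common multiple $P$ with $\varphi_*^P$ fixing $w$ blockwise; this uniform bound is exactly what has to be stated and justified.

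In (iii), your first half is right: for non-erasing $\varphi$, a word fixed by $\varphi_*^n$ is a product of letters fixed by $\varphi_*^n$, so $\Orb(\varphi_*)$ is the free monoid over the $\varphi$-periodic letters. The last step fails, however: the sub-alphabet $A=\{s\in S\mid\varphi(s)\in S\}$ need not be $\varphi$-invariant, so there is no ``induced self-map of $A$'' to which (i) could be applied. For $\varphi(a)=b$, $\varphi(b)=ab$ one has $a\in A$ but $\varphi(a)\notin A$, and $a$ is not periodic; hence the set of periodic letters can be strictly smaller than $A$, and the identification should be with the letters $s$ such that $\varphi^j(s)\in S$ for every $j$ (equivalently $\varphi^n(s)=s$ for some $n$), which is the reading under which the displayed formula in (iii) must be understood.
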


\begin{Rem} (i) In Proposition~\ref{prop:finworderasornot}, Point~$(i)$ is a consequence of Point~$(ii)$. Indeed, consider an extra letter~$t$ and the set $T = S\cup\{t\}$. 
Define the map~$\tilde{\varphi}:T\to T^*$ by $\tilde{\varphi}(s) = \varphi(s)$ for $s$ in $S$ and $\tilde{\varphi}(t) = t$. Let $\psi: S^*\to T_\infty^\omega$ be the map defined by $\psi(w) = wttt\cdots$.
It is immediate that~$W$ belongs to~$\Orb(\varphi_*)$, $\fvie{\varphi_*}$ and $\Atrac(\varphi_*)$ if and only if~$\psi(W)$ belongs to~$\Orb(\tilde{\varphi}_\omega)$, $\fvie{\tilde{\varphi}_\omega}$ and~$\Atrac(\tilde{\varphi}_\omega)$, respectively.\\
(ii) It is well-known that $S^\omega$ is compact for its standard metric defined~\cite[Cor.~3.13]{PePi}. A non-erasing substitution is continuous. By Proposition~\ref{prop:compact}, the sets~$\fvie{\varphi_\omega}$ and $\Atrac(\varphi_\omega)$ are equal in this case. 
 \end{Rem}

To prove Proposition~\ref{prop:finworderasornot} we need to introduce some notation. Consider a substitution~$\varphi : S\to S^*$. The~\emph{mortality exponent}~$exp(\varphi_*)$ of $\varphi_*$ is the least integer~$m$ such that~$\varphi_*^m(s)$ is the empty word for every letter in~$M_\varphi$. One has $exp(\varphi_*) \leq Card(M_\varphi)\leq Card(S)$. When $W$ belongs to~$S^*$, we denote  by $\ell(W)$ and by~$\ell_\infty(W)$ its length and its number of immortal letters, respectively. We will need the following classical lemma~\cite{ShWa}.

\begin{Lem} Let~$a$ lie in $S$ such that $\varphi(a) = V_1aV_2$ with $\ell_\infty(V_1) = 0$.\\
(i) If $\ell_\infty(V_2) = 0$, then $\varphi_*^{exp(\varphi_*)}(a)$ is fixed by $\varphi_*$.\\
(ii) If $\ell_\infty(V_2) \neq 0$ then for every $n$ one has $\ell(\varphi_*^n(a))\geq n+1$, and there exists an infinite word~$\overrightarrow{\varphi_*^\omega}(a)$ which is fixed by~$\varphi_\omega$ and such that for every~$n\geq exp(\varphi_*)$ the finite word~$\varphi_*^n(a)$ is one of its (left) prefixes.\label{Lem:lemptfixe} 
\end{Lem}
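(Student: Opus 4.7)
The plan is to begin by iterating the defining relation $\varphi_*(a) = V_1 a V_2$, using that $\varphi_*$ is a monoid homomorphism, to obtain the explicit product formula
$$\varphi_*^n(a) \;=\; \varphi_*^{n-1}(V_1)\cdots\varphi_*(V_1)\,V_1 \cdot a \cdot V_2\,\varphi_*(V_2)\cdots\varphi_*^{n-1}(V_2),$$
which is easily checked by induction on $n$. The key structural fact is that, by the very definition of the mortality exponent, $\varphi_*^k(V_1) = \varepsilon$ for every $k \geq exp(\varphi_*)$, since every letter of $V_1$ is mortal.

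Part~(i) falls out immediately from the formula: when $V_2$ also consists entirely of mortal letters, both $\varphi_*^m(V_1)$ and $\varphi_*^m(V_2)$ vanish for $m = exp(\varphi_*)$. Comparing the expansions of $\varphi_*^m(a)$ and $\varphi_*^{m+1}(a)$, the only difference lies in these two outer factors, which both collapse to $\varepsilon$, so $\varphi_*^{m+1}(a) = \varphi_*^m(a)$.

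For part~(ii), I would first record the standard observation that an immortal letter remains immortal under iteration (otherwise some iterate would be a word of entirely mortal letters, hence would die after $exp(\varphi_*)$ further steps). Consequently, since $V_2$ carries an immortal letter, so does each $\varphi_*^k(V_2)$, and in particular $\ell(\varphi_*^k(V_2)) \geq 1$. Reading off the iterated formula, the central portion $a \cdot V_2\,\varphi_*(V_2)\cdots\varphi_*^{n-1}(V_2)$ contributes at least $1 + n$ letters, giving $\ell(\varphi_*^n(a)) \geq n+1$. Next, for $n \geq exp(\varphi_*)$ the outermost factor $\varphi_*^n(V_1)$ in the expansion of $\varphi_*^{n+1}(a)$ is empty, so the formula collapses to
$$\varphi_*^{n+1}(a) \;=\; \varphi_*^n(a)\cdot \varphi_*^n(V_2).$$
This exhibits $(\varphi_*^n(a))_{n \geq exp(\varphi_*)}$ as an increasing chain for the prefix order, with lengths tending to infinity; its limit is a right infinite word $\overrightarrow{\varphi_*^\omega}(a)$ admitting each $\varphi_*^n(a)$ with $n \geq exp(\varphi_*)$ as a prefix. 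Since each $\varphi_*^k(V_2)$ contains an immortal letter, this limit actually lies in $S_\infty^\omega$ (so $\varphi_\omega$ is defined on it). Fixedness then follows because the relation $\varphi_\omega(\varphi_*^n(a)) = \varphi_*^{n+1}(a)$ shows that $\varphi_\omega(\overrightarrow{\varphi_*^\omega}(a))$ has the same sequence of prefixes.

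The main obstacle is purely bookkeeping rather than conceptual: keeping the off-by-one indices of the $V_1$- and $V_2$-blocks straight in the product formula, and in the erasing case taking care that the limiting infinite word belongs to $S_\infty^\omega$ so that being fixed by $\varphi_\omega$ is meaningful. Once the product formula is in hand, every claim of the lemma reduces to a direct inspection of which factors in the product are forced to be empty.
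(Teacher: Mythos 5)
Your argument is correct and follows essentially the same route as the paper: iterate $\varphi_*(a)=V_1aV_2$ to get the expansion $\varphi_*^n(a)=\varphi_*^{n-1}(V_1)\cdots V_1\,a\,V_2\cdots\varphi_*^{n-1}(V_2)$, kill the $V_1$-blocks (and the $V_2$-blocks in case (i)) once the exponent reaches $exp(\varphi_*)$, and in case (ii) use that immortality is preserved under $\varphi$ to get the length bound and the infinite fixed word. Your presentation of the limit word as the prefix-limit of the increasing chain $\varphi_*^n(a)$, $n\geq exp(\varphi_*)$, is just a repackaging of the explicit word $\varphi_*^{q-1}(V_1)\cdots V_1aV_2\varphi_*(V_2)\varphi_*^2(V_2)\cdots$ used in the paper.
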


\begin{proof}Fix an integer~$m\geq q = exp(\varphi_*)$. Then, one has~$\varphi_*^{m}(V_1) = \varepsilon$.  If $\ell_\infty(V_2) = 0$ then $\varphi_*^{m}(V_2) = \varepsilon$. Otherwise,~$\ell(V_2\varphi_*(V_2)\cdots \varphi_*^{m}(V_2)) \geq \ell_\infty(V_2\varphi_*(V_2)\cdots \varphi_*^{m}(V_2)) \geq m$.  But~$\varphi_*^m(a)$ is equal to~$\varphi_*^{m-1}(V_1)\cdots V_1aV_2\cdots \varphi_*^{m-1}(V_2)$. Hence, if $\ell_\infty(V_2) = 0$, then it follows that~$\varphi_*^{q+1}(a) = \varphi_*^{q}(a) = \varphi_*^{q-1}(V_1)\cdots V_1aV_2\cdots \varphi_*^{q-1}(V_2) = \varphi_*^{q}(a)$. If we assume~$\ell_\infty(V_2) \neq 0$, then $\varphi_*^{q-1}(V_1)\cdots V_1aV_2\varphi_*(V_2)\varphi^2_*(V_2)\cdots$ belongs to~$S_\infty^\omega$ and is fixed by $\varphi_\omega$; furthermore~$\varphi_*^{m}(a)$ is one of its left prefix.
 \end{proof}

We are now ready to prove Proposition~\ref{prop:finworderasornot}.

\begin{proof}[Proof of Proposition~\ref{prop:finworderasornot}]
We start with a direct proof of~$(i)$ in the non-erasing case. This proves~$(iii)$ too. Let $W$ belong to $\Atrac(\varphi_*)$. Clearly, for every word~$V$ in~$S^*$, the equality~$\ell(\varphi(V))\geq \ell(V)$ holds. Now, choose $N\geq (\textrm{Card}(S))^{\ell(W)}$. Consider $W_N$ such that $\varphi^N(W_N) = W$. Since~$\ell(\varphi^j(W_N))\leq\ell(W)$ for every $j$ in~$\{0,\ldots,N\}$, there exist $j < i$ in $\{0,\ldots,N\}$ such that $\varphi^j(W_N) = \varphi^i(W_N)$. Hence, $\varphi^j(W_N)$ and $W$ belong to $\Orb(\varphi)$. 

We turn now to the proof of~$(ii)$. It is in the same spirit than the above one, but more technical. Let~$W$ belong to~$\Atrac(\varphi_\omega)$ and assume it does not belong to~$\Orb(\varphi_\omega)$. Denote by $k$ the number of immortal letters of~$\varphi$, that is~$k = Card(S)-Card(M_\varphi)$. Write $W = UV$ where $U$ is the maximal periodic prefix ( for~$\varphi_*$) of $W$ with period equal or lower than~$k$. The word $U$ exists since $W$ is not periodic and $k$ is fixed. The word~$V$ lies also in~$\Atrac(\varphi_\omega)$ and not in~$\Orb(\varphi_\omega)$. Thus, we can assume without restriction that~$U$ is the empty word, and $W = V$. We choose a sequence $(V_n)$ in $S_\infty^\omega$ such that $\varphi^n_\omega(V_n) = W$ for each $n$.  We set $W_{n,i} = \varphi_\omega^{n-i}(V_n)$ for $0\leq i\leq n$. In particular, $W_{n,n} = V_n$ and $W_{n,0} = W$. Let $s_{n,i}$ denote the first (left) immortal letter of~$W_{n,i}$. For each $n$, there exists~$s_n$ in $S$ such that~$Card\{0\leq i\leq n\mid s_{n,i} = s_n\} \geq \lambda_n = E(\frac{n-1}{k})+1$. Since $S$ is finite, there exists an immortal letter~$s$ and~an increasing map~$\psi: \mathbb{N}\to \mathbb{N}$ such that $s_{\psi(n)} = s$ for every~$n$. By hypothesis, each $s_{n,i}$ is the first immortal letter of the word~$W_{n,i}$. It follows there exists a minimal positive integer~$r$ such that $\varphi^r(s) = V_1sV_2$ with~$\ell_\infty(V_1) = 0$. We must have  $1\leq r\leq k$. Furthermore, for every $n$ there exists~$i_n$ in~$\{0,\cdots, k-1\}$ such that $s_{\psi(n),i_n+jr} = s$ for all~$j$ in $\{0,\cdots, \lambda_{\psi(n)}-1\}$. We deduce there exists~$\iota$ in~$\{0,\cdots, k-1\}$ and $\psi_1: \mathbb{N}\to \mathbb{N}$ with $s_{\psi_1(n),\iota+jr} = s$ for every $n$ in~$\mathbb{N}$ and  every~$j$ in~$\{0,\cdots, \lambda_{\psi_1(n)}-1\}$. Assume~$\ell_\infty(V_2) = 0$, and consider $n$ such that $\lambda_{\psi_1(n)}\geq exp(\varphi_*) \geq exp(\varphi_*^r)$. By Lemma~\ref{Lem:lemptfixe}, $\varphi_*^i(s)$ is fixed by~$\varphi_*^r$ for $i \geq  exp(\varphi_*^r) r$. Therefore, $\varphi_*^{\lambda_{\psi_1(n)}r+\iota}(s)$ is fixed by~$\varphi_*^r$. But the word~$\varphi_*^{\lambda_{\psi_1(n)}r+\iota}(s)$ is a  non-trivial prefix of $\varphi_\omega(W_{\psi_1(n), \lambda_{\psi_1(n)}r+\iota})$, that is of~$W$ : a contradiction since $r\leq k$. Assume~$\ell_\infty(V_2) \neq 0$.  For every $n$ such that $\lambda_{\psi_1(n)}\geq exp(\varphi_*) \geq exp(\varphi_*^r)$, the word~$\varphi_*^\iota(\varphi_*^{\lambda_{\psi_1(n)}r}(s))$ is a prefix of $W$ and its length is greater than $\lambda_{\psi_1(n)}$. Thus, by Lemma~\ref{Lem:lemptfixe}, the word~$W$ is equal to~$\varphi_\omega^\iota\left(\overrightarrow{(\varphi_*^r)^\omega}(a)\right)$ and is therefore fixed by $\varphi^r_*$, again a contradiction.      
\end{proof}
\begin{Rem} Consider the hypotheses of Proposition~\ref{prop:finworderasornot}. For an immortal letter~$s$ in $S$, denote by $m_s$ the minimal positive integer such that the first left immortal letter of $\varphi^{m_s}(s)$ is~$s$, when it exists. Let $m$ be the lcm of the $m_s$. What we actually show when proving  Proposition~\ref{prop:finworderasornot} is that $\fvie{\varphi_\omega} = \Fix(\varphi_\omega^{m})$. \end{Rem}

\begin{Exe} Let $S = \{a,b\}$. Define $\varphi_\omega: \{a,b\}^\omega\to\{a,b\}^\omega$ by $\varphi(a) = ab$ and $\varphi(b) = ba$. This substitution is non-erasing, and is called the~\emph{Thue-Morse} substitution. It turns out that~$\varphi_\omega$ has a unique fixed point~$W = abba\cdots$, the well-known \emph{Thue-Morse infinite word}. By the above remark,~$\Fix(\varphi_\omega)=\Orb(\varphi_\omega) = \fvie{\varphi_\omega} = \Atrac(\varphi_\omega) = \{W\}$. 
 \end{Exe}

\section{The stable set of a monoid of self-maps}
\label{sectgenera}
The original motivation for this note is to have a better understanding of the connection between a monoid of self-maps and what we call below its stable set. More precisely we are interested by the stable set of infinite words associated with a given finite family of substitutions. The seminal case we have in mind is the one of episturmian morphisms and its stable set, which is the set of \emph{infinite epistumian words}\cite{GlLeRi}. The object of this last section is to introduce the notion of a \emph{stable set of a monoid of self-maps} and to provide motivating examples.

 In the sequel, we fix a non-empty set $X$, and denote by $F(X)$ the set of \emph{self-maps} of $X$. If $\smap$ is included in~$F(X)$, we denote by $\wsmap$ and by $\msmap$ the free monoid on the alphabet~$\smap$ and the sub-monoid of~$F(X)$ generated by~$\smap$, respectively.

The notion of an inverse limit was recalled in Section~\ref{section:example}. We remark that for any non-empty subset~$\smap$ of $F(X)$, the free monoid~$\wsmap$ is a poset for the prefix order: $\varphi_1\cdots \varphi_n \leq \varphi'_1\cdots \varphi'_m$ if $n\leq m$ and $\varphi_i = \varphi'_i$ for $1\leq i\leq n$. 
\begin{definition}
Consider a non-empty subset~$\smap$ of $F(X)$. Denote by $f\to\overline{f}$ the canonical surjective morphism from $\wsmap$ to $\msmap$.  If~$\varphi\leq\varphi'$ in $\wsmap$, we denote by~$\psi_{\varphi,\varphi'}$ the element of~$\msmap$ such that $\overline{\varphi'} = \overline{\varphi}\circ \psi_{\varphi,\varphi'}$.  Let~$(X_\varphi, \psi_{\varphi,\varphi'})$ be the projective system defined by $X_\varphi = X$ for $\varphi\in \wsmap$.  The \emph{stable set}~$Stab(\smap)$ of~$\smap$ is defined by  $$Stab(\smap) = \pi_0(\underleftarrow{\lim}(X_\varphi))$$ 
\end{definition}

It is immediate that the above definition generalized the definition of the stable set of a self-map. When~$\smap = \{\varphi\}$, the sets $Stab(\{\varphi\})$ and~$\fvie{\varphi}$ are equal.  As far as we know, this notion of stable set has not been considered before. However, interesting sets of infinite words occur as stable sets:  

\begin{Exe} Let~$\Sigma$ be a finite set and denote by $\Sigma^\omega$ the set of left infinite words over~$\Sigma$. For $a$ in~$\Sigma$, denote by $L_a: \Sigma^\omega\to \Sigma^\omega$ and $R_a: \Sigma^\omega\to \Sigma^\omega$ the substitutions defined by $L_a(b) = ab$, $R_a(b) = ba$ for $b\neq a$ in~$\Sigma$ and  $L_a(a) = R_a(a) = a$. Set $\smap = \{L_{a}, R_{a}, a\in \Sigma\}$. The elements of the monoid generated by $\mathcal{F}$ are called \emph{episturmian morphisms}~\cite{{GlLeRi}}. The elements of $Stab(\smap)$ are the so-called~\emph{infinite episturmian words}~\cite[Theorem~3.10]{JuPi}, that have been introduced independently of  (and before) the episturmian morphisms~\cite{DrJuPi}.   
 
\end{Exe}
\begin{Exe}Let $\Sigma = \{1,2,\cdots,n\}$ and denote by $\Sigma^\omega$ the set of left infinite words over~$\Sigma$. We say that a word~$W$ is \emph{2-square free} if any two consecutive letters are distinct.  For each 2-square free word~$W = a_1a_2a_3\cdots$ in~$\Sigma^\omega$, we define the map~$\psi_W : \Sigma^\omega\to\Sigma^\omega$ by $$\psi_W(x_1x_2x_3\cdots) = \underbrace{a_1\cdots a_1}_{x_1}\underbrace{a_2\cdots a_2}_{x_2}\underbrace{a_3\cdots a_3}_{x_3}\cdots $$
Given a word~$V$ in~$\Sigma^\omega$, there is at most one pair~$(W,\Delta(V))$ in~$(\Sigma^\omega)^2$ that verifies~$\psi_W(\Delta(V)) = V$. The map $\Delta$ is the well-known \emph{run-length encoding} used in compressing data algorithms~\cite{BrJaPa}. The so-called \emph{Kolakoski word}~$2211212211\cdots$ is the unique fixed point of~$\psi_{21212\cdots}$ in $\{1,2\}^\omega$. If for~$\mathcal{F}$ we consider the set of maps~$\psi_W$, where the words~$W$ are 2-square free in $\Sigma^\omega$, then the elements of $Stab(\smap)$ are by definition the so-called \emph{smooth words over~$\Sigma$} considered in~\cite{BeBrCh,BrJaPa}. \label{exekola}
\end{Exe}

The basic property of stable-sets of self-maps stated in Lemma~\ref{lemfac1} extends without difficulties to the wider context of stable sets of self-map monoids:
\begin{definition}
Let $\smap$ be included in~$F(X)$ and~$Y$ be a subset of~$X$. We say that~$Y$ is \emph{stabilized} by~$\smap$ if $$\bigcup_{\varphi\in\smap}\varphi(Y) = Y.$$ \end{definition} 

\begin{Lem} The set~$Stab(\smap)$ is stabilized by~$\smap$, and every set stabilized by~$\smap$ is included in $Stab(\smap)$.
\end{Lem}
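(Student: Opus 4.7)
The plan is to check the two parts of the lemma using the projective-limit characterization of $Stab(\smap)$, mirroring the proof of Lemma~\ref{lemfac1}.

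For the inclusion $Stab(\smap)\subseteq \bigcup_{\varphi\in\smap}\varphi(Stab(\smap))$, take $x\in Stab(\smap)$ with witness family $(x_\eta)_{\eta\in\wsmap}$ and any letter $\varphi\in\smap$. The shifted family $(x_{\varphi\eta})_{\eta\in\wsmap}$ inherits compatibility from $(x_\eta)$, since the relation $x_{\varphi\eta}=\overline{\eta'}(x_{\varphi\eta\eta'})$ is an instance of the original coherence conditions, so $x_\varphi\in Stab(\smap)$; combined with $x=x_\varepsilon=\varphi(x_\varphi)$ this gives $x\in\varphi(Stab(\smap))$.

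For the reverse inclusion $\bigcup_{\varphi\in\smap}\varphi(Stab(\smap))\subseteq Stab(\smap)$, take $y=\varphi(x)$ with $x\in Stab(\smap)$ and $\varphi\in\smap$. I would construct a witness $(z_\eta)_{\eta\in\wsmap}$ for $y$ by transplanting the original tree onto the $\varphi$-subtree of $\wsmap$: set $z_{\varphi\tau}:=x_\tau$ for every $\tau\in\wsmap$. The root condition $z_\varepsilon=\varphi(z_\varphi)=\varphi(x_\varepsilon)=y$ and the compatibility within $\varphi\wsmap$ are then immediate. The delicate step is filling in the subtrees rooted at letters $\psi\neq\varphi$: one must supply $z_\psi\in\psi^{-1}(y)$ and extend further preimages consistently, exploiting the rich preimage structure already carried by $(x_\eta)$ at positions of the form $\psi\tau$.

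For the maximality statement, let $Y$ be stabilized by $\smap$ and fix $y\in Y$. We build $(y_\eta)_{\eta\in\wsmap}\subseteq Y$ by induction on the length of $\eta$: set $y_\varepsilon=y$, and, given $y_\eta\in Y$, invoke the stabilization condition $\bigcup_\psi\psi(Y)=Y$ to choose, for each $\psi\in\smap$, a preimage $y_{\eta\psi}\in Y$ with $\psi(y_{\eta\psi})=y_\eta$. The compatibility relations $y_\eta=\overline{\eta'}(y_{\eta\eta'})$ telescope from the one-letter case, so $(y_\eta)_\eta$ lies in $\underleftarrow{\lim}(X_\eta)$ and $y=y_\varepsilon\in Stab(\smap)$. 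The main obstacle is the off-$\varphi$ extension in the reverse inclusion of the first assertion, which has no analogue in the single self-map setting of Lemma~\ref{lemfac1} and is the only place where the branching nature of $\wsmap$ actually bites.
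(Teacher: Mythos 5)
Your first inclusion (shifting the witness family to the $\varphi$-subtree) is fine, but the other two steps do not go through as written. For $\bigcup_{\varphi\in\smap}\varphi(Stab(\smap))\subseteq Stab(\smap)$ you only flag the ``delicate step'' of supplying compatible preimages at the nodes $\psi\tau$ with $\psi\neq\varphi$, and under the reading of $Stab(\smap)$ you are using (a compatible family indexed by all of $\wsmap$, i.e.\ a full tree of preimages, one under \emph{every} generator at every node) this step cannot be carried out, because the statement is then simply false: take $X=\mathbb{N}\cup\{*\}$, $\smap=\{\varphi,\psi\}$ with $\varphi(n)=n-1$ for $n\geq 1$, $\varphi(0)=\varphi(*)=*$, $\psi(n)=n$, $\psi(*)=0$; every $n\in\mathbb{N}$ carries a full tree of preimages inside $\mathbb{N}$, while $*$ has no $\psi$-preimage, so the full-tree stable set is $\mathbb{N}$ and is not stabilized since $\varphi(0)=*$. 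A similar example ($X=\{a,b\}$, $\varphi:a\mapsto a,\ b\mapsto a$, $\psi:a\mapsto b,\ b\mapsto b$, where $X$ is stabilized but the full-tree set is empty) kills the maximality statement under that reading, and your maximality argument already contains the corresponding error: the hypothesis $\bigcup_{\psi\in\smap}\psi(Y)=Y$ gives, for $y_\eta\in Y$, a preimage in $Y$ under \emph{some} $\psi\in\smap$, not under every $\psi$, so you cannot ``choose, for each $\psi\in\smap$, a preimage $y_{\eta\psi}\in Y$''.

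The way out is the meaning of $Stab(\smap)$ that the paper actually uses (see the episturmian and smooth-word examples and the closing remark): $x\in Stab(\smap)$ iff there are letters $f_1,f_2,\ldots$ in $\smap$ and points $x_0=x,x_1,x_2,\ldots$ with $f_n(x_n)=x_{n-1}$ for all $n$, i.e.\ one coherent chain of preimages along a single infinite branch of $\wsmap$, not a compatible family over the whole tree (for episturmian morphisms a word cannot lie simultaneously in the images of $L_a$ and $L_b$ for $a\neq b$, so the full-tree set would be empty there). With this reading all three steps are immediate and your own devices suffice: shifting the branch gives $Stab(\smap)\subseteq\bigcup_\varphi\varphi(Stab(\smap))$; prepending the letter $\varphi$ to a branch for $x$ gives a branch for $\varphi(x)$, which settles the inclusion you could not finish; and in the maximality step you only need, at each stage, one generator and one preimage of the current point inside $Y$, which is exactly what $\bigcup_{\psi\in\smap}\psi(Y)=Y$ provides. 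So there is a genuine gap in the proposal as written, and it disappears only once the definition is read as intended.
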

\begin{proof} The proof is immediate.
\end{proof}

Let us finish this note with a question and a comment.

\begin{Que} Consider a non-empty subset~$\smap$ of $F(X)$, and denote by $f\to\overline{f}$ the canonical surjective morphism from $\wsmap$ to $\msmap$. Defined the \emph{attracting set} of~$\smap$ to be $$\Atrac(\smap) = \bigcap_{n\geq 1}\left(\bigcup_{f\in \smap^*, \ell(f) = n} \overline{f}(X)\right)$$ As in the case of a single self-map, it is clear that~$Stab(\smap)$ is included in~$\Atrac(\smap)$. When is this inclusion an equality ?   
\end{Que}

We remark that in the case of Example~\ref{exekola}, it is trivial that the equality holds by run-length encoding map. In general, answering to this question seems more complicated. The reader can note that when $\smap$ is finite, then for every element~$x_0$ in~$\Atrac(\smap)$ the exists a right infinite word~$f_1f_2\cdots$ in~$\smap^\omega$ and a sequence~$(x_n)$ in $X$ such that for every~$n$ one has $x_0 = f_0(f_1(\cdots f_n(x_n)))$. However, this does not mean that we can choose $x_{n-1} = f_n(x_n)$.  

\noindent {\bf Acknowledgments.} The author is grateful to F. Lev\'e, G. Levitt, P. Poggi-Corradini, G. Richomme and  L. Vainerman for useful and motivating exchanges. 


\end{document}